\gdef\n@te#1#2{\leavevmode\vadjust{%
 {\setbox\z@\hbox to\z@{\strut#1}%
  \setbox\z@\hbox{\raise\dp\strutbox\box\z@}\ht\z@=\z@\dp\z@=\z@%
  #2\box\z@}}}
\gdef\leftnote#1{\n@te{\hss#1\quad}{}}
\gdef\rightnote#1{\n@te{\quad\kern-\leftskip#1\hss}{\moveright\hsize}}
\gdef\?{\FN@\qumark}
\gdef\qumark{\ifx\next"\DN@"##1"{\leftnote{\rm##1}}\else
 \DN@{\leftnote{\rm??}}\fi{\rm??}\next@}}
\DeclareFontFamily{OT1}{wncyr}{\hyphenchar\font45 }
\DeclareFontShape{OT1}{wncyr}{m}{n}{%
   <5> <6> <7> <8> <9> gen * wncyr
   <10> <10.95> <12> <14.4> <17.28> <20.74>  <24.88>wncyr10}{}
\DeclareFontShape{OT1}{wncyr}{m}{it}{%
   <5> <6> <7> <8> <9> gen * wncyi
   <10> <10.95> <12> <14.4> <17.28> <20.74> <24.88> wncyi10}{}
\DeclareFontShape{OT1}{wncyr}{m}{sc}{%
   <5> <6> <7> <8> <9> <10> <10.95> <12> <14.4>
   <17.28> <20.74> <24.88>wncysc10}{}
\DeclareFontShape{OT1}{wncyr}{b}{n}{%
   <5> <6> <7> <8> <9> gen * wncyb
   <10> <10.95> <12> <14.4> <17.28> <20.74> <24.88>wncyb10}{}
\def\rus{\usefont{OT1}{wncyr}{m}{n}\cyracc\fontsize{9}{11pt}\selectfont}
\theoremstyle{plain}
\newtheorem{theorem}{Theorem}
\newtheorem{lemma}{Lemma}
\newtheorem{proposition}{Proposition}
\newtheorem{corollary}{Corollary}
\newtheorem*{conjsnonumber}{Conjectures}
\theoremstyle{definition}
\theoremstyle{remark}
\newtheorem{remark}{\it Remark}
\newtheorem{example*}{Example}
\def\XT+{{\rm X}(T)_+}
\def\Xt{{\rm X}(T)}
\begin{document}

\

\vskip -8mm

\
\title[Equations defining affine algebraic groups]
{On the equations defining\\ affine algebraic groups}

\author[Vladimir L. Popov]{Vladimir L. Popov}
\address{Steklov Mathematical Institute, Russian Academy of Sciences, Gubkina 8, Moscow 119991, Russia}

\address{National Research University\\ Higher School of Economics, Myas\-nitskaya
20, Moscow 101000,\;Russia}

\email{popovvl@mi.ras.ru}

\thanks{
Supported by
 grant {\rus RFFI}
14-01-00160.\newline
\indent \;2010 {\it Mathematics Subject Classification}: Primary 14L10;
Secondary: 20G05.\;{\it Key\-words and Phrases}: reductive algebraic group, Borel subgroup, weight, module, orbit.}

\begin{abstract}
For the coordinate
algebras of connected affine algebraic groups, we explore the problem of finding a
presentation by generators and relations canonically determined by the group structure. \end{abstract}


\maketitle

\section{Introduction}

Connected algebraic groups constitute a remarkable class of irreducible quasiprojective algebraic varieties.\;It contains the subclasses of abeli\-an varieties and affine algebraic groups.\;These
subclasses are basic:\;by
Chevalley's theorem, every connected algebraic group $G$  has a unique
connected normal affine algebraic subgroup $L$ such that $G/L$ is an abelian variety;
whence
the variety $G$ is an $L$-torsor over the abelian variety $G/L$.\;The varieties from these subclasses can be embedded in many ways as closed subvarieties in, respectively, projective and affine spaces.\;A natural question then arises as to whether there are distinguished embeddings and equations of their images, which are
canonically determined by the group structure.\;For abelian varieties, this is the existence problem for canonically defined bases in linear systems and that of
presentating homogeneous coordinate rings of ample  invertible sheafs by generators and relations.\;These problems were explored and solved by D.\;Mumford  \cite{Ma66}.\;For
affine algebraic groups, it is the existence problem of the canonically defined presentations of the coordinate algebras of such groups by generators and relations.\;We explore this problem in the present paper.

We fix as the base field an algebraically closed field $k$ of arbitrary characteristic.\;In this paper,
as in \cite{Bor91}, ``variety'' means ``algebraic variety'' in the sense of
Serre
\cite[Subsect.\;34]{Se55}; every variety is taken over $k$.

Let $G$ be a connected affine algebraic group and let $R_u(G)$ be its unipotent radical.\;In view of \cite[Props.\,1,\,2]{Gr58}, \cite[Thm.\,10]{Ro56}, the underlying variety of $G$
is isomorphic to the product of that of $G/R_u(G)$ and $R_u(G)$, and the latter is isomorphic to an affine space.\;Therefore,
the problem under consideration is reduced to the case of reductive groups.\;Given this, henceforth $G$ stands for a connected {\it reductive} algebraic group.

The simplest
case of ${\rm SL}_{2}$ is the guiding example.\;Take  the polynomial $k$-algebra $k[x_1,x_2,x_3,x_4]$ in four variables $x_i$.\;The usual presentation of $k[{\rm SL}_{2}]$ is given by the surjective homomorphism
\begin{equation}\label{SL2}
\mu\colon k[x_1, x_2, x_3, x_4]\to  k[{\rm SL}_{2}],\quad\mbox{
$\mu(x_i)\biggl(\!\renewcommand{\arraycolsep}{2pt}\begin{bmatrix}
a_1 & a_2\\
a_3 & a_4
\end{bmatrix}\!\biggr)\!=\!a_i$},
\end{equation}
whose kernel is the ideal $(x_1x_4-x_2x_3-1)$.\;After rewriting, this presentation can be interpreted in
terms of the group structure of ${\rm SL}_2$ as follows.

We have $k[x_1, x_2, x_3, x_4]=
k[x_1, x_3]\otimes_k k[x_2, x_4]$ and the restriction of $\mu$ to the subalgebra
$k[x_1, x_3]$ (respectively, $k[x_2, x_4]$) is an isomorphism with the subalgebra
${\mathcal S}^+$ (respectively, ${\mathcal S}^-$) of $k[{\rm SL}_{2}]$
consisting of all regular functions
invariant with respect to
the subgroup $U^+$ (respectively, $U^-$) of all unipotent upper (respectively, lower) triangular matrices acting by right translations.\;Hence 
\eqref{SL2} yields the following presentation of $k[{\rm SL_2}]$ by generators and relations:
\begin{equation}\label{SL2P}
\begin{split}
k[{\rm SL_2}]&\cong ({\mathcal S}^+\otimes _{k}{\mathcal S}^{-})/{\mathcal I},\\
{\mathcal S}^+&=k[\mu (x_1),\mu (x_3)]\cong k[x_1, x_3],\\
{\mathcal S}^{-}&=
k[\mu(x_2),\mu (x_4)]\cong k[x_2, x_4],\\
{\mathcal I}&=\big(\mu (x_1)\otimes \mu(x_4)-\mu(x_2)\otimes \mu(x_3)-1\big).
\end{split}
\end{equation}\vskip 1mm

The subgroups $U^+$, $U^-$ are opposite maximal unipotent subgroups of ${\rm SL}_2$. The subalgebras ${\mathcal S}^+$, ${\mathcal S}^-$ are stable with respect to ${\rm SL}_2$ acting by left translations, and $f\!:=\!\mu (x_1)\otimes \mu(x_4)\!-\!\mu(x_2)\otimes \mu(x_3)\!-\!1$ is the unique element of $({\mathcal S}^+\otimes_k{\mathcal S}^-)^{{\rm SL}_2}$
determined by the conditions
$f(e,e)\!=\!1$, $k[f]\!=\!({\mathcal S}^+\otimes_k{\mathcal S}^-)^{{\rm SL}_2}$.

We show that there is an analogue of \eqref{SL2P} for every connected reductive
algebraic group $G$.\;Namely, we endow $k[G]$ with the $G$-module structure determined by left translations and fix in $G$ a pair of opposite Borel subgroups $B^+$ and $B^-$.\;Let $U^\pm$ be the unipotent radical of $B^\pm$.\;Consider the $G$-stable subalgebras
\begin{equation}\label{S+}
\begin{split}
{\mathcal S}^+&:=\{ f\in k[G]\mid  f(gu)=f(g)\; \mbox{for all $g\in G, u\in U^+$} \},\\
{\mathcal S}^-&:=\{ f\in k[G]\mid  f(gu)=f(g)\; \mbox{for all $g\in G, u\in U^-$} \}
\end{split}
\end{equation}
of $k[G]$ and the natural multiplication homomorphism of $k$-algebras
\begin{equation}\label{mu}
\mu \colon  {\mathcal S}^+\otimes _{k}{\mathcal S}^{-}\hskip -1mm\to k[G],\quad f_1\otimes f_2\mapsto f_1f_2.
\end{equation}

 For $k=\mathbb C$, the following conjectures were put forward  in \cite{FT92}:

 \begin{conjsnonumber}[D.\,E.\,Flath and J. Towber, 1992]\

 \begin{enumerate}[\hskip 2.2mm\rm(i)]
 \item[\rm (S)] The homomorphism $\mu$ is surjective.
 \item[\rm (K)] The ideal ${\rm ker}\,\mu$ in ${\mathcal S}^+\otimes_k {\mathcal S}^-$
is generated by $({\rm ker}\,\mu)^G$.
 \end{enumerate}
 \end{conjsnonumber}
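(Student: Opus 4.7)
The plan is to interpret both conjectures as statements about the morphism
\begin{equation*}
\varphi \colon G \longrightarrow X^+ \times X^-, \qquad g \longmapsto (gU^+, gU^-),
\end{equation*}
where $X^\pm := \operatorname{Spec}(\mathcal{S}^\pm)$ contains $G/U^\pm$ as an open subvariety. Since $U^+ \cap U^- = \{e\}$, $\varphi$ is injective. Conjecture (S) is equivalent to $\varphi$ being a closed immersion; (K) says that the defining ideal is generated by the $G$-invariants of $\mathcal{S}^+ \otimes \mathcal{S}^-$ under the diagonal left $G$-action. The starting point is the decomposition of $\mathcal{S}^\pm$ into right-$T$-weight subspaces $\mathcal{S}^\pm_\mu$, each a $G$-submodule under left translation; crucially, for $\lambda \in \XT+$ the tensor product $\mathcal{S}^+_\lambda \otimes \mathcal{S}^-_{-\lambda}$ carries a one-dimensional $G$-invariant line arising from the natural duality between $\mathcal{S}^+_\lambda$ and $\mathcal{S}^-_{-\lambda}$ as $G$-modules.

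For (S), the map $\mu$ is $(G \times T_{\mathrm{diag}})$-equivariant, so surjectivity can be checked one right-$T$-weight at a time. On the big Bruhat cell $U^- T U^+ \subset G$, every matrix coefficient of a highest-weight module can be written as a product of an element of $\mathcal{S}^+$ with one of $\mathcal{S}^-$ --- this is the natural generalization of the $\mathrm{SL}_2$ observation that a matrix entry is ``first-column data times second-column data.'' Since $k[G]$ embeds into $k[U^- T U^+]$, surjectivity on the big cell forces surjectivity on $G$. In positive characteristic one combines this with the good filtration of $k[G]$ and an induction on the dominance order.

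For (K), the key observation is that $\mu$ is left-$G$-equivariant, so $\ker\mu$ is a $G$-stable ideal and $\mu\bigl((\mathcal{S}^+ \otimes_k \mathcal{S}^-)^G\bigr) \subseteq k[G]^G = k$. A standard invariant-theory calculation identifies
\begin{equation*}
(\mathcal{S}^+ \otimes_k \mathcal{S}^-)^G \;=\; \bigoplus_{\lambda \in \XT+} k \cdot f_\lambda,
\end{equation*}
where $f_\lambda$ spans the canonical invariant line in $\mathcal{S}^+_\lambda \otimes \mathcal{S}^-_{-\lambda}$. An explicit calculation --- a Laplace expansion of a determinant in the $\mathrm{SL}_n$ cases --- gives $\mu(f_\lambda) = c_\lambda$ with $c_\lambda \neq 0$; the $\mathrm{SL}_2$ relation $x_1 x_4 - x_2 x_3 - 1$ is precisely $f_\omega - c_\omega$. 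So $(\ker\mu)^G = \bigoplus_\lambda k \cdot (f_\lambda - c_\lambda)$, and the task reduces to showing these elements generate $\ker\mu$ as an ideal. I would do this by comparing $G$-isotypic multiplicities of $\ker\mu$ with those of the ideal generated by $\{f_\lambda - c_\lambda\}$, using the $G$-module decompositions of $\mathcal{S}^+ \otimes \mathcal{S}^-$ and of $k[G]$.

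The main obstacle is this multiplicity comparison for (K). In characteristic zero, Schur orthogonality together with explicit tensor-product decompositions reduces the count to combinatorics of dominant weights. In positive characteristic the tensor products occurring in $\mathcal{S}^+ \otimes \mathcal{S}^-$ need not be semisimple, and one must control the good filtration in those tensor products carefully --- this is where the argument will be most delicate and where modular representation theory enters essentially.
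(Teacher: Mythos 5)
Your setup is the right one (it matches the paper's: the map $g\mapsto (gU^+,gU^-)$ into $\operatorname{Spec}\mathcal S^+\times\operatorname{Spec}\mathcal S^-$, injectivity from $U^+\cap U^-=\{e\}$, surjectivity of $\mu$ being equivalent to this map being a closed immersion, and $(\ker\mu)^G$ spanned by the elements $f_\lambda-c_\lambda$), but both core steps are asserted rather than proved, and in each case the missing ingredient is exactly what the actual proof supplies. For (S), your big-cell argument hinges on the claim that every regular function on $G$, restricted to $U^-TU^+$, is a sum of products of restrictions of \emph{global} elements of $\mathcal S^+$ and $\mathcal S^-$; since the restriction of $\mathcal S^\pm(\lambda)$ to the cell is a small finite-dimensional piece of $k[U^\mp]t^{\pm\lambda}$ (and a function in $\mathcal S^-$ does not factor through the $U^+$-coordinate of the cell at all), this is essentially a restatement of (S), not a reduction of it; the phrase ``good filtration and induction on the dominance order'' in positive characteristic is a placeholder, not an argument. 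The paper proves closedness of the image by a quite different mechanism: it shows the single orbit $B^+\cdot z$ is already closed, using the character-semigroup criterion for closed orbits of connected solvable groups (\cite[Thm.\,4]{Po88}) together with the completeness of $G/B^+$, and it separately establishes that the orbit map is an isomorphism onto its image via $G_z=U^+\cap U^-=\{e\}$, normality of $\operatorname{Spec}\mathcal S^+$, Zariski's Main Theorem and a separability computation with differentials --- points your sketch does not touch and which are where the characteristic-$p$ difficulties are actually absorbed.

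For (K) the gap is more serious: you reduce to ``comparing $G$-isotypic multiplicities of $\ker\mu$ with those of the ideal generated by the $f_\lambda-c_\lambda$'' and yourself flag that this comparison is out of reach in positive characteristic, where $\mathcal S^+(\lambda)\otimes\mathcal S^-(\mu)$ is not semisimple. The paper avoids multiplicity bookkeeping entirely: it proves a general statement (its Theorem on generation by invariants) --- if a reductive $H$ acts on an affine $Z$ and the orbit map $H\to Z$, $h\mapsto h\cdot a$, is a closed embedding, then $\ker\varphi^*$ is generated by $(\ker\varphi^*)^H$ --- by means of an \'etale slice at $a$ (Bardsley--Richardson in characteristic $p$), the fact that the fiber of $Z\to Z/\!\!/H$ through $a$ is exactly the closed orbit, and a local lemma on fibers with surjective differential. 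That slice-theoretic idea is the missing key, and it is what makes the result uniform in all characteristics. A smaller but real gap: your nonvanishing $\mu(f_\lambda)=c_\lambda\neq 0$ is justified only by a determinantal computation for ${\rm SL}_n$; for general $G$ (especially in characteristic $p$, where $\mathcal S^\pm(\lambda)$ is not simple) this requires an argument, which the paper gives via covariants $G\to\mathcal S^\pm(\lambda)^*$ and the nondegeneracy statement for invariant pairings of Weyl modules evaluated on the $B^\pm$-semi-invariant generators.
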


 If these conjectures are true, then the problem under consideration is reduced to
 the following:

\begin{enumerate}[\hskip 4.2mm \rm(a)]
 \item find the canonically defined generators of  the $k$-algebra $({\rm ker}\,\mu)^G$,
 \item find the canonically defined presentations of ${\mathcal S}^\pm$ by generators and relations.
 \end{enumerate}

 In \cite{FT92}, Conjectures (S) and (K) were proved  for $k=\mathbb C$ and $G={\rm SL}_n$,
 ${\rm GL}_n$, ${\rm SO}_n$, ${\rm Sp}_n$ by means of lengthy
 direct
 computations of
 some Laplace decompositions, minors, and algebraic identities between them.\;In Theorems \ref{sur} and \ref{ker}
 below we prove Conjectures (S) and (K) in full generality, with no restrictions on $k$ and $G$.

In Theorems  \ref{vect}, \ref{iG} below we describe ${\rm ker}\,\mu$ as a vector space over $k$.\;In Theorem \ref{gi}, we
solve the above part (a) of the problem, finding the cano\-ni\-cally
defined generators of the $k$-algebra $({\rm ker}\,\mu)^G$.\,We call them {\it ${\rm SL}_2$-type relations} of the sought-for canonical presentation of $k[G]$ because
for $G={\rm SL}_2$ the element
$\mu (x_1)\otimes \mu(x_4)-\mu(x_2)\otimes \mu(x_3)-1$
is just such a generator of
$\mathcal I$ (see \eqref{SL2P}).\;All
of them are inhomogeneous of degree $2$.\;If $G$ is
semisimple,
  they are indexed by the elements of the Hilbert basis ${\mathscr H}$ of the monoid of dominant weights of $G$.\;Note that the cardinality $|{\mathscr H}|$ of ${\mathscr H}$ is at least ${\rm rank}\,G$ with equality for simply connected $G$, but in the general case it may be much bigger.\;For instance, if $G={\rm PGL}_{r}$, then $|{\mathscr H}|\geqslant p(r)+\varphi(r)-1$, where $p$ and $\varphi$ are, respectively, the classical partition function and the Euler function (see \cite[Example\;3.15]{Po11}).\;Note that
the problem of determining a full set of generators of the ideal ${\rm ker}\,\mu$ was formulated in
\cite[Sect.\,4]{Fl94} and, for $k=\mathbb C$,  $G={\rm SL}_n$, ${\rm GL}_n$, ${\rm SO}_n$, ${\rm Sp}_n$, solved in
\cite{FT92} by lengthy direct computations.

For a semisimple group $G$
whose monoid of dominant weights is freely generared (i.e., with $|{\mathscr H}|={\rm rank}\, G$),
 a solution to the above part (b) of the problem  in characteristic $0$ was
 obtained (but not published) by B.\;Kostant; his proof appeared in \cite[Thm.\;1.1]{LT79}.\;In arbitrary characteristic, such a solution is given by Theorems
 \ref{RR}, \ref{fg},
 \ref{grS} below, which are
heavily based on
the main results of \cite{RR85} and \cite{KR87}.\;All relations in this case are homogeneous
of degree $2$.\;We call them {\it Pl\"ucker-type relations} of the sought-for canonical presentation of $k[G]$ because  the $k$-algebra ${\mathcal S}^\pm$ for $G={\rm SL}_n$ is the coordinate algebra of the affine multicone over the flag variety, and if ${\rm char}\,k=0$, these relations are generated by the classical
Pl\"ucker-type relations, obtained by Hodge \cite{Ho42}, \cite{Ho43}, that determine this multicone
(see below Section \ref{example}).\;The
set of these relations is a union of
finite-dimensional vector spaces canonically determined by the group structure of $G$; these spaces are
indexed by the elements of ${\mathscr H}\times {\mathscr H}$ and
different spaces have zero intersection (see Theorem \ref{grS}).\;Thus
in this case, we obtain a canonical presentation of $k[G]$, in which all relations are quadratic and divided into two families: homogeneous relations
of Pl\"ucker type and inhomogeneous relations
 of ${\rm SL}_2$-type.\;As a parallel, we
recall that any abelian variety is canonically presented as an intersection of quadrics in a projective space given by the Riemann equations; see \cite{Ke98}, \cite{LB92}.

For an arbitrary reductive group $G$, let $\tau\colon \widehat G\to G$ be the universal covering.\;Then $\widehat G=Z\times C$, where $Z$ is a torus, $C$ is a simply connected semisimple group, $G=\widetilde G/{\rm ker}\,\tau$, and ${\rm ker}\,\tau$ is a finite central subgroup.\;The algebra ${\mathcal S}^\pm$ for $\widehat G$ is then
the tensor product of $k[Z]$ and the algebra ${\mathcal S}^\pm$ for $C$.\;Since the presentation of
$k[Z]$ is clear, and that of ${\mathcal S}^\pm$ for $C$ are given by Theorems \ref{RR}, \ref{fg}, \ref{grS}, the above part (b) of the problem is reduced to finding a presentation for the invariant algebra of the finite abelian group ${\rm ker}\,\tau$.

As an illustration, in the last Section \ref{example} we consider the example of $G={\rm SL}_n$, ${\rm char}\,k=0$ and
describe explicitly how the ingredients of our construction
and the canonical presentation of $k[G]$ look like in this case.

\vskip 3mm

The preprints \cite{Po95}, \cite{Po00} of these results in characteristic $0$ have been disseminated long ago.\;The
validity of the results
in arbitrary characteristic was announced in \cite{Po00}.\;The author is pleased by the arisen occasion to finally present
the complete proofs.

\vskip 4mm

\noindent{\it Notation and conventions.}

\vskip 1mm

Below we use freely the standard notation and conventions of
\cite{Bor91}, \cite{Ja87}, \cite{PV94}, and \cite{Sh13}.\;In
parti\-cu\-lar, the algebra of functions regular on a variety $X$ is denoted by $k[X]$, the field of rational functions on an irreducible $X$ is denoted by $k(X)$, and the local ring of $X$ at a point $x$ is denoted by ${\mathcal O}_{x, X}$.\;For a morphism $\varphi\colon X\to Y$ of varieties, $\varphi^*\colon k[Y]\to k[X]$ denotes its co\-mor\-phism.

All topological terms refer to the Zariski topology; the closure of $Z$ in $X$
is denoted by $\overline Z$ (each time it is clear from the context  what is $X$).

  The fixed point set of an action of a group $P$ on a set $S$ is denoted by\;$S^P$.\;Every action $\alpha\colon H\times X\to X$ of an algebraic group $H$ on a variety $X$ is always assumed to be regular (the latter means that $\alpha$ is  a morphism).\;For every $h\in H$, $x\in X$, we write $g\cdot x$ in place of $\alpha(g, x)$.\;The $H$-orbit and the $H$-stabilizer of $x$ are denoted respectively by $H\cdot x$ and $H_x$.\;Every homomorphism of algebraic groups is assumed to be algebraic.

  The additively written group of characters (i.e., homomorphisms to the multiplicative group of $k$) of an algebraic group $H$ is denoted by ${\rm X}(H)$.\;The value of a character $\lambda\in {\rm X}(H)$ at an element $h\in H$ is denoted by $h^\lambda$.\;Given
  a $kH$-module $M$, its weight space with weight $\lambda\in X(H)$ is denoted by $M_\lambda$.

  We fix in $G$ the maximal torus
  \begin{equation*}
  T:=B^+\cap B^-
  \end{equation*}
 and identify ${\rm X}(B^\pm)$ with ${\rm X}(T)$ by means of the restriction isomorphisms ${\rm X}(B^\pm)\to {\rm X}(T)$, $\lambda\mapsto \lambda|_{T}^{\ }$.

 By $\XT+$ we denote
the monoid of dominant weights of $T$ determined by $B^+$.\;Below the highest weight of every simple $G$-module is assumed to be
the highest weight with respect to $T$ and $B^+$.

 We denote by $w_0$ be the longest element of the Weyl group of $T$ and fix in the normalizer of $T$ a representative ${\overset{.}{w}}_0$ of $w_0$.\;We then have ${\overset{.}{w}}_0B^\pm{\overset{.}{w}}_0^{-1}=B^\mp$ and
${\overset{.}{w}}_0U^\pm{\overset{.}{w}}_0^{-1}=U^\mp$.\;For every $\lambda \in \XT+$, we put $\lambda^*:=-w_0(\lambda)\in \XT+$.

The set of all nonnegative rational numbers is denoted by ${\mathbb Q}_{\geqslant 0}$ and we put
${\mathbb N}:={\mathbb Z}\cap {\mathbb Q}_{\geqslant 0}$.

If $m\in \mathbb Z$, $m>0$, we put $[m]:=\{a\in \mathbb Z\mid 1\leqslant a\leqslant m\}$.

 For $d\in \mathbb N$, we
 denote by $[m]_d$ the set of all increasing sequences
 of $d$ elements of $[m]$ (if $d\notin [m]$, then $[m]_d=\varnothing$).



\section{Proof of Conjecture (S)}

For every $\lambda \in \Xt$, the spaces
\begin{equation}\label{S+la}
\begin{split}
{\mathcal S}^+(\lambda)&:=\{ f\in {\mathcal S}^+\mid f(gt)=t^{\lambda }f(g)\; \mbox{for all $g\in G, t\in T$} \},\\
{\mathcal S}^-(\lambda)&:=\{ f\in {\mathcal S}^-\mid f(gt)=t^{w_0(\lambda) }f(g)\; \mbox{for all $g\in G, t\in T$} \}.
\end{split}
\end{equation}
are the finite-dimensional (see, e.g.,\;\cite[I.5.12.c)]{Ja87}) $G$-submodules of the $G$-modules ${\mathcal S}^+$ and ${\mathcal S}^-$ respectively.\;Since ${\mathcal S}^-(\lambda)$ is the right translation of ${\mathcal S}^+(\lambda)$ by ${\overset{.}{w}}_0,$ these $G$-submodules are isomorphic.\;In the notation of \cite[II.2.2]{Ja87}, we have
\begin{equation}\label{H}
{\mathcal S}^-(\lambda)=H^0(\lambda^*),
\end{equation}
so by \eqref{H} and \cite[II.2.6, 2.2, 2.3]{Ja87}, the following properties hold:
\begin{equation}\label{Sm}
\left.
\begin{split}
&{\hskip 1mm\rm (i)}\;{\mathcal S}^\pm(\lambda)\neq 0 \iff \lambda\in \XT+;\\[-1.2mm]
&{\rm (ii)}\;\mbox{${\rm soc}_G^{\ }{\mathcal S}^\pm(\lambda)$
is a simple $G$-module with the highest weight $\lambda^*$.}
\end{split}
\right\}
\end{equation}

If ${\rm char}\, k=0$, then the $G$-module ${\mathcal S}^+(\lambda)$ is semisimple and hence ${\mathcal S}^+(\lambda)={\rm soc}_G^{\ }{\mathcal S}^+(\lambda)$ by \eqref{Sm}(ii).\;If ${\rm char}\, k>0$, then, in general,  this equality does not hold.
From \eqref{S+}, \eqref{S+la}, and \eqref{Sm}(i) we infer that
\begin{equation}\label{grS+-}
\begin{split}
{\mathcal S}^+&=\displaystyle \bigoplus_{\lambda \in \XT+}
{\mathcal S}^+(\lambda), \quad {\mathcal S}^+(\lambda){\mathcal S}^+(\mu)\subseteq {\mathcal S}^+(\lambda+\mu),\\
{\mathcal S}^-&=\displaystyle \bigoplus_{\lambda \in \XT+}
{\mathcal S}^-(\lambda), \quad {\mathcal S}^-(\lambda){\mathcal S}^-(\mu)\subseteq {\mathcal S}^-(\lambda+\mu),
\end{split}
\end{equation}
i.e., \eqref{grS+-} are the $\XT+$-gradings of the algebras ${\mathcal S}^+$ and ${\mathcal S}^-$.\;They are obtained from each other by the right translation by ${\overset{.}{w}}_0$.

\begin{theorem}\label{RR} The
linear span of ${\mathcal S}^\pm(\lambda){\mathcal S}^\pm(\mu)$ over $k$ is ${\mathcal S}^\pm(\lambda+\mu)$.
\end{theorem}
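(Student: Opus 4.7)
The plan is to reduce Theorem~\ref{RR} to the classical statement that on the flag variety $G/B^+$ the tensor product of sections $H^0(\lambda)\otimes H^0(\mu)\to H^0(\lambda+\mu)$ is surjective for all $\lambda,\mu\in\XT+$, and then to invoke the main theorem of \cite{RR85}.

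First I would show that it suffices to treat the $-$ case, by exhibiting the right translation $R_{{\overset{.}{w}}_0}\colon f\mapsto f(\,\cdot\,{\overset{.}{w}}_0)$ as an algebra isomorphism $\mathcal{S}^+\to\mathcal{S}^-$ sending $\mathcal{S}^+(\lambda)$ onto $\mathcal{S}^-(\lambda)$. Right translation is always an algebra automorphism of $k[G]$; using ${\overset{.}{w}}_0U^+{\overset{.}{w}}_0^{-1}=U^-$ one checks that $R_{{\overset{.}{w}}_0}$ maps $\mathcal{S}^+$ into $\mathcal{S}^-$, and the character identity $({\overset{.}{w}}_0^{-1}t{\overset{.}{w}}_0)^\lambda=t^{w_0(\lambda)}$ shows it preserves the $T$-grading on the nose. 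Hence the $+$ and $-$ statements are equivalent.

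Next I would reinterpret the $-$ side geometrically. By the identification \eqref{H}, the grading \eqref{grS+-} exhibits $\mathcal{S}^-$ as the full section ring
\begin{equation*}
\mathcal{S}^-\;\cong\;\bigoplus_{\lambda\in\XT+}H^0(\lambda^*)
\end{equation*}
of all $G$-linearized line bundles $\mathcal{L}_{\lambda^*}$ on $G/B^+$, with algebra multiplication coinciding under this identification with the canonical tensor-product pairing on global sections. Since $\lambda\mapsto\lambda^*$ is a bijection on $\XT+$, the claim $\mathcal{S}^-(\lambda)\,\mathcal{S}^-(\mu)=\mathcal{S}^-(\lambda+\mu)$ is precisely the surjectivity of
\begin{equation*}
H^0(\lambda^*)\otimes_k H^0(\mu^*)\longrightarrow H^0(\lambda^*+\mu^*)
\end{equation*}
for every pair $\lambda,\mu\in\XT+$.

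Finally I would invoke \cite{RR85}, whose main theorem asserts exactly this surjectivity for every pair of dominant weights, in arbitrary characteristic (equivalently, projective normality of $G/B^+$ with respect to every ample line bundle, and normality of the multicone $\operatorname{Spec}\mathcal{S}^-$). This gives the result. The only real obstacle lies inside the cited theorem: in characteristic zero it is an immediate consequence of Borel--Weil together with complete reducibility, but in positive characteristic both tools fail, and the Ramanan--Ramanathan proof rests on the Frobenius splitting of Schubert varieties. As we use \cite{RR85} as a black box, no further computation is required here.
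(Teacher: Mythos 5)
Your proposal is correct and follows essentially the same route as the paper: the paper likewise notes that $\mathcal{S}^+(\lambda)$ and $\mathcal{S}^-(\lambda)$ are exchanged by right translation by ${\overset{.}{w}}_0$, identifies $\mathcal{S}^-(\lambda)=H^0(\lambda^*)$ via \eqref{H}, and then cites the main result of \cite{RR85} as the substance of the theorem, remarking (as you do) that only positive characteristic is genuinely difficult since in characteristic $0$ the claim follows from simplicity of $\mathcal{S}^\pm(\lambda+\mu)$.
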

\begin{proof}
This statement is the main result of \cite{RR85}.\;Note that the difficulty lies in the case of positive characteristic: since ${\mathcal S}^\pm$ is an integral domain, if
${\rm char}\,k=0$, then the claim immediately follows from \eqref{Sm}(i) and
the inclusions in \eqref{grS+-} because then ${\mathcal S}^\pm(\lambda+\mu)$ is a simple
$G$-module.
\end{proof}

\begin{theorem}\label{fg}\

\begin{enumerate}[\hskip 2.2mm\rm(i)]
\item If ${\mathscr G}$ is a generating set of the semigroup $\XT+$, then the $k$-algebra ${\mathcal S}^\pm$ is generated
by the subspace $\bigoplus_{\lambda\in {\mathscr G}}{\mathcal S}^\pm(\lambda)$.
\item The $k$-algebras $\mathcal {\mathcal S}^+$ and ${\mathcal S}^-$ are finitely generated.
\end{enumerate}
\end{theorem}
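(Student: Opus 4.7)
The strategy is to reduce both parts to Theorem \ref{RR} together with the $\XT+$-gradings \eqref{grS+-}, and to invoke Gordan's lemma for the finiteness assertion in (ii). There is no substantive new input required.

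For (i), since ${\mathcal S}^\pm=\bigoplus_{\lambda\in \XT+}{\mathcal S}^\pm(\lambda)$ by \eqref{grS+-}, it suffices to show that for every $\lambda\in\XT+$ the homogeneous component ${\mathcal S}^\pm(\lambda)$ lies in the subalgebra $A\subseteq {\mathcal S}^\pm$ generated by $\bigoplus_{\mu\in\mathscr{G}}{\mathcal S}^\pm(\mu)$. The natural approach is induction on the smallest integer $n\geqslant 1$ for which there is an expression $\lambda=\mu_1+\cdots+\mu_n$ with $\mu_i\in\mathscr{G}$; such an $n$ exists by the hypothesis on $\mathscr{G}$. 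The base case $n=1$ is immediate. For the inductive step, Theorem \ref{RR} applied $n-1$ times gives
\[
{\mathcal S}^\pm(\lambda)\;=\;\operatorname{span}_k\bigl({\mathcal S}^\pm(\mu_1)\cdots{\mathcal S}^\pm(\mu_n)\bigr),
\]
and the inductive hypothesis applied to $\mu_2+\cdots+\mu_n$ shows that this span is contained in $A$.

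For (ii), I would combine (i) with a finiteness property of the monoid $\XT+$ itself. Indeed, $\XT+$ equals the intersection of the lattice $\Xt$ with the rational polyhedral cone in $\Xt\otimes_{\mathbb{Z}}\mathbb{Q}$ cut out by the finitely many inequalities $\langle\,\cdot\,,\alpha^{\vee}\rangle\geqslant 0$ indexed by the simple coroots determined by $B^+$. By Gordan's lemma, such an intersection is a finitely generated commutative monoid; fix a finite generating set $\mathscr{G}$. Each ${\mathcal S}^\pm(\lambda)$ is finite-dimensional (as noted right before \eqref{H}), so $\bigoplus_{\lambda\in\mathscr{G}}{\mathcal S}^\pm(\lambda)$ is a finite-dimensional subspace that, by (i), generates ${\mathcal S}^\pm$ as a $k$-algebra.

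I do not anticipate a real obstacle: all the analytic/representation-theoretic content has been absorbed into Theorem \ref{RR}, and the monoid-theoretic ingredient (Gordan's lemma) is classical. The only point requiring mild care is to remember, in part (ii), that one needs both (i) and the separate fact that $\XT+$ is finitely generated as a semigroup — the gradings \eqref{grS+-} alone do not imply finite generation of ${\mathcal S}^\pm$.
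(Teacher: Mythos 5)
Your proposal is correct and follows essentially the same route as the paper: part (i) is deduced from the grading \eqref{grS+-} and Theorem \ref{RR} (your induction on the length of an expression $\lambda=\mu_1+\cdots+\mu_n$ just makes this explicit), and part (ii) combines (i) with finite generation of $\XT+$ as the intersection of the lattice $\Xt$ with a rational polyhedral cone (Gordan's lemma) and the finite-dimensionality of each ${\mathcal S}^\pm(\lambda)$.
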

\begin{proof} Part (i) follows from \eqref{grS+-} and
Theorem \ref{RR}.\;Being the intersection of the lattice $\Xt$ with a convex cone in
$\Xt\otimes_{\mathbb Z} \mathbb Q$ generated by finitely many vectors,
the semigroup $\XT+$ is finitely ge\-ne\-ra\-ted.\;This, (i), and the inequality
$\dim_k{\mathcal S}^\pm(\lambda)<\infty$ imply (ii).
\end{proof}

Now we are ready to turn to the proof of Conjecture (S).

\begin{theorem}\label{sur}
The homomorphism
$\mu$ is surjective.
\end{theorem}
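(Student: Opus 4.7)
I would reinterpret surjectivity of $\mu$ geometrically. By Theorem~\ref{fg}(ii) the algebras $\mathcal{S}^\pm$ are finitely generated, so $Y^\pm := \operatorname{Spec}(\mathcal{S}^\pm)$ is an affine $G$-variety. The inclusions $\mathcal{S}^\pm \hookrightarrow k[G]$ are the comorphisms of $G$-equivariant morphisms $\pi^\pm : G \to Y^\pm$, and their product $\phi := \pi^+ \times \pi^- : G \to Y^+ \times Y^-$, $g \mapsto (gU^+, gU^-)$, has comorphism $\mu$. Hence $\mu$ is surjective if and only if $\phi$ is a closed embedding, which I check in three steps.

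First, $\phi$ is injective: since $U^\pm$ is unipotent, $G/U^\pm$ is quasi-affine with coordinate ring $\mathcal{S}^\pm$ separating its points; hence $\phi(g_1) = \phi(g_2)$ forces $g_1 U^\pm = g_2 U^\pm$, whence $g_2^{-1}g_1 \in U^+ \cap U^- = \{e\}$. Second, $\phi$ is an immersion: by $G$-equivariance it suffices to check at $e$, where $\ker d\phi_e = \mathfrak{u}^+ \cap \mathfrak{u}^- = 0$.

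The principal step is the closedness of $\phi(G)$ in $Y^+ \times Y^-$. Using Theorem~\ref{fg}, choose a finite generating set $\mathscr{G} \subset \XT+$ of the monoid $\XT+$. Set $V^\pm := \bigoplus_{\nu \in \mathscr{G}} V(\nu)$ and $v^\pm := \sum_\nu v_\nu^\pm$, with $v_\nu^+$ (respectively $v_\nu^-$) a highest (respectively lowest) weight vector. The identification $\mathcal{S}^\pm(\nu)^* \cong V(\nu)$, which follows from \eqref{H} together with $H^0(\nu^*)^* = V(\nu)$, gives $G$-equivariant closed embeddings $Y^\pm \hookrightarrow V^\pm$ under which $\phi(g) = (g \cdot v^+,\, g \cdot v^-)$. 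I apply the Hilbert--Mumford criterion: the orbit $G\cdot(v^+, v^-)$ is closed if and only if every one-parameter subgroup $\lambda$ of $G$ admitting a limit $\lim_{t \to 0}\lambda(t)(v^+, v^-)$ has this limit in the orbit. Writing $\lambda = \operatorname{Ad}(h)\lambda_0$ with $\lambda_0 \in Y(T)$, existence of the limit amounts to $\langle \lambda_0, \mu \rangle \geqslant 0$ for every $T$-weight $\mu$ in the support of $h^{-1}(v^+, v^-) \in V^+ \oplus V^-$. The crucial observation is that, writing $h$ in Bruhat form $h = b_1 \dot w b_2$ with $b_1, b_2 \in B^+$, the support of $h^{-1}(v^+, v^-)$ contains the weights $w^{-1}\nu$ (contributed by $v^+$) and $w^{-1} w_0 \nu$ (contributed by $v^-$) for each $\nu \in \mathscr{G}$. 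The joint inequalities $\langle w\lambda_0, \nu\rangle \geqslant 0$ and $\langle w\lambda_0, w_0\nu\rangle \geqslant 0$ for all $\nu \in \mathscr{G}$ force $w\lambda_0$ to be simultaneously dominant and anti-dominant, hence $w\lambda_0 = 0$ on the semisimple factor; and since $\mathscr{G}$ also generates the lattice $\mathrm{X}(Z^\circ)$ of central characters (as generating a lattice monoid requires both signs), $\lambda_0 = 0$ on the connected center $Z^\circ$ as well. Thus only the trivial 1-PS arises, and its limit $(v^+, v^-)$ lies in $\phi(G)$, so $\phi(G)$ is closed.

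The main obstacle is the Bruhat-stratified weight-support tracking, which requires careful analysis of how the Weyl element and the unipotent parts move highest and lowest weight vectors through the weight lattice. In positive characteristic one additionally invokes the Kempf--Rousseau refinement of Hilbert--Mumford in place of the classical one. Once closedness is established, combining it with injectivity and differential injectivity yields $\phi$ a closed embedding, so $\mu$ is surjective.
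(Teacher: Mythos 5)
Your reduction of the theorem to the statement that $\phi=\pi^+\times\pi^-\colon G\to Y^+\times Y^-$ is a closed embedding, and your treatment of injectivity and of separability via $\ker d\phi_e={\rm Lie}\,U^+\cap{\rm Lie}\,U^-=\{0\}$, are correct and run parallel to the first half of the paper's argument (where $Y^\pm$ appears as $X$ and one studies the orbit of $z=(x,y)$). For the closedness of the image you take a genuinely different route: Hilbert--Mumford/Kempf applied to the orbit of $(v^+,v^-)$ inside $\bigoplus_{\nu\in\mathscr G}{\mathcal S}^+(\nu)^*\oplus\bigoplus_{\nu\in\mathscr G}{\mathcal S}^-(\nu^*)^*$, whereas the paper proves the stronger fact that already $B^+\cdot z$ is closed, by the solvable-orbit criterion of Theorem \ref{clos} applied to the characters of $B^+$ occurring in the restrictions of ${\mathcal S}^\pm$ (the inclusions \eqref{M+}, \eqref{M-}), and then uses completeness of $G/B^+$.

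However, the ``crucial observation'' on which your closedness step rests is false as stated. You claim that if $h=b_1\dot w b_2$ with $b_1,b_2\in B^+$, then the $T$-weight support of $h^{-1}\cdot(v^+,v^-)$ contains $w^{-1}w_0\nu$ coming from the lowest weight vector $v^-_\nu$. The analogous claim for $v^+_\nu$ is correct because $B^+$ acts on the highest weight line by a character; but $b_1^{-1}$ does not act by a scalar on $v^-_\nu$, and the extra terms it produces can cancel the would-be extremal component after applying $\dot w^{-1}$ and $b_2^{-1}$. Concretely, for $G={\rm SL}_2$ acting on $V(\varpi)=k^2$, any $g=h^{-1}\in U^-\setminus\{e\}$ lies in the big Bruhat cell $B^+\dot w_0 B^+$, yet $g\cdot v^-=v^-$, so the support of $g\cdot v^-$ is $\{-\varpi\}$ and does not contain $w_0^{-1}w_0\varpi=\varpi$. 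The correct extremal-weight statement for $v^-_\nu$ uses the $B^-$-Bruhat cell of $h^{-1}$, which in general involves a Weyl element $w''\neq w$; the resulting inequalities $\langle w\lambda_0,\nu\rangle\geqslant 0$ and $\langle w''\lambda_0,w_0\nu\rangle\geqslant 0$ for $\nu\in\mathscr G$ no longer force $\lambda_0=0$ without a genuine argument relating the two cells of one and the same element (or an analysis of the full supports). So the heart of the closedness proof --- exactly the point where the paper invokes Theorem \ref{clos} --- is missing. Note also that your intended intermediate conclusion, that only the trivial one-parameter subgroup admits a limit at $(v^+,v^-)$, is (because the stabilizer of this point is trivial) essentially equivalent to the closedness you are trying to prove, so it cannot be obtained this cheaply; the remaining parts of your proposal are sound, but this step needs a real proof.
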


Our proof of Theorem \ref{sur} is based on two general results.\;The first is the following
well-known
surjectivity criterion:
\begin{lemma}\label{ce} The following properties of a morphism $\varphi\colon X\to Y$
of affine algebraic varieties are equivalent:
\begin{enumerate}[\hskip 4.2mm \rm(a)]
 \item
 $\varphi$ is a closed embedding;
 \item
$\varphi^*\colon k[Y]\to k[X]$ is surjective.
\end{enumerate}
\end{lemma}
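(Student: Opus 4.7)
The plan is to treat the two implications separately, using throughout the standard anti-equivalence between affine algebraic varieties over $k$ and reduced finitely generated $k$-algebras, under which morphisms correspond via the comorphism construction.

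For (a)$\,\Rightarrow\,$(b), I would factor the closed embedding $\varphi$ as the composition of an isomorphism $X \xrightarrow{\sim} \varphi(X)$ with the inclusion $\iota\colon\varphi(X) \hookrightarrow Y$ of the closed subvariety $\varphi(X)$. The comorphism of $\iota$ is the canonical quotient map $k[Y] \twoheadrightarrow k[Y]/I(\varphi(X)) = k[\varphi(X)]$, which is visibly surjective; composing it with the isomorphism $k[\varphi(X)] \xrightarrow{\sim} k[X]$ yields $\varphi^*$, which is therefore surjective.

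For (b)$\,\Rightarrow\,$(a), I would set $J := \ker \varphi^*$ and form the closed subvariety $Z := V(J) \subseteq Y$. Since the induced map $k[Y]/J \to k[X]$ is injective and $k[X]$ is reduced, $J$ is a radical ideal, so $k[Z] = k[Y]/J$ and the inclusion $k[Y]/J \hookrightarrow k[X]$ is actually an isomorphism (as $\varphi^*$ is surjective). Because $\varphi^*$ annihilates $J$, the image of $\varphi$ lies in $Z$, giving a factorization $\varphi = \iota \circ \psi$ with $\psi\colon X \to Z$ and $\iota\colon Z \hookrightarrow Y$ the closed embedding. By construction, $\psi^*\colon k[Z] = k[Y]/J \to k[X]$ is the isomorphism induced by $\varphi^*$, so $\psi$ itself is an isomorphism of affine varieties; hence $\varphi$ is a closed embedding.

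Both implications are elementary, and I do not anticipate any real obstacle: the lemma is essentially a reformulation of the anti-equivalence between affine varieties and their coordinate algebras, the only nontrivial ingredient being the observation that $\ker \varphi^*$ is automatically radical because $k[X]$ is reduced. The author presumably isolates this statement as a named lemma because it is precisely the translation that will reduce the proof of Theorem \ref{sur} to the geometric assertion that the morphism $G \to \operatorname{Spec}(\mathcal{S}^+ \otimes_k \mathcal{S}^-)$ dual to $\mu$ is a closed embedding.
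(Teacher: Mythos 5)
Your proof is correct. The paper itself does not argue the lemma at all: it simply cites Steinberg's lecture notes (\cite[1.5]{St74}), so there is no internal proof to compare against. Your two-step argument — (a)$\Rightarrow$(b) by factoring a closed embedding through the quotient $k[Y]\twoheadrightarrow k[Y]/I(\varphi(X))$, and (b)$\Rightarrow$(a) by noting that $J=\ker\varphi^*$ is radical (since $k[Y]/J$ embeds into the reduced ring $k[X]$), so that $\varphi$ factors as an isomorphism $X\to V(J)$ followed by the closed inclusion $V(J)\hookrightarrow Y$ — is exactly the standard proof one would find in such a reference, resting only on the anti-equivalence between affine varieties and reduced finitely generated $k$-algebras. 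The only point worth adjusting is your closing remark: in the paper the lemma is applied not to $\mu$ itself but to the morphism $\gamma=\alpha\times\beta\colon G\to X\times X$, reducing the surjectivity of $\gamma^*$ (hence of $\mu$, via the factorization \eqref{ep}) to the assertion that $\gamma$ is a closed embedding; this is the same idea you describe, just routed through the auxiliary variety $X\times X$ rather than through the spectrum of ${\mathcal S}^+\otimes_k{\mathcal S}^-$ directly.
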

\begin{proof}
See, e.g.,\;\cite[1.5]{St74}.
\end{proof}

The second  is the closedness criterion for
orbits of
connected solvable affine algebraic groups
that generalizes
Rosenlicht's classical theorem on closedness of orbits of unipotent groups
\cite[Thm.\,2]{Ro61}.

\begin{theorem}\label{clos} Let a connected solvable affine algebraic group $S$
act on an affine algebraic variety $Z$.\;Let $x$ be a point
of $Z$.\;Consider the orbit morphism $\tau: S\rightarrow Z$, $s\mapsto s\cdot z$.\;Then
the following
properties
are equivalent:
\begin{enumerate}[\hskip 4.2mm\rm(a)]
\item the orbit $S\cdot z$ is closed in $Z$;
\item the semigroup
$\{\lambda \!\in\! {\rm X}(S)\mid \mbox{
\hskip -1mm the function
$S\!\to\! k$, $s\!\mapsto\! s^{\lambda}$ lies in $\tau^{*}(k[Z])$}\!\}$
is a group.
\end{enumerate}
\end{theorem}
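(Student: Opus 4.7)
The plan is to analyze the semigroup $\Lambda := \{\lambda\in X(S)\mid s\mapsto s^\lambda\text{ lies in }\tau^*(k[Z])\}$; since $\tau^*(k[Z])$ is a subalgebra of $k[S]$, $\Lambda$ is automatically a submonoid of $X(S)$, so the content of (b) is closure under negation. The orbit morphism factors as $\tau\colon S\twoheadrightarrow S\cdot z\hookrightarrow Z$, hence $\tau^*\colon k[Z]\to k[S\cdot z]\hookrightarrow k[S]$, and $\tau^*(k[Z])$ coincides with the image of $k[\overline{S\cdot z}]$ in $k[S]$.

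For (a) $\Rightarrow$ (b): if $S\cdot z$ is closed in $Z$, then $k[Z]\twoheadrightarrow k[S\cdot z]$ is surjective and $S\cdot z$ is affine. Given $\lambda\in\Lambda$, choose $F\in k[S\cdot z]$ with $F(s\cdot z)=s^\lambda$; since $s^\lambda$ never vanishes, $F$ is a unit in $k[S\cdot z]$, and $F^{-1}$ pulls back along $\tau$ to $s\mapsto s^{-\lambda}$, giving $-\lambda\in\Lambda$.

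For (b) $\Rightarrow$ (a), I argue by contrapositive. If $S\cdot z$ is not closed, then $Y:=\overline{S\cdot z}\setminus(S\cdot z)$ is a nonempty proper $S$-stable closed subset of $\overline{S\cdot z}$, and its ideal $I(Y)\subseteq k[\overline{S\cdot z}]$ is a nonzero $S$-stable ideal. Every regular function on an affine $S$-variety lies in a finite-dimensional $S$-stable subspace, so $I(Y)$ contains such a nonzero subspace $V$. Applying the Lie--Kolchin theorem to $S$ acting on $V$ produces a nonzero $h\in I(Y)$ and some $\chi\in X(S)$ with $s\cdot h=s^\chi h$ for all $s$; equivalently, $h(s\cdot z')=s^{-\chi}h(z')$ for all $z'\in\overline{S\cdot z}$.

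Then $h(z)\neq 0$: otherwise $h$ would vanish on $S\cdot z$ by semi-invariance and hence on its closure, contradicting $h\neq 0$. Thus $\tau^*(h)=h(z)\cdot s^{-\chi}$, so $-\chi\in\Lambda$. If also $\chi\in\Lambda$, pick $g\in k[\overline{S\cdot z}]$ with $g(s\cdot z)=s^\chi$; then $gh$ equals the nonzero constant $h(z)$ on the dense subset $S\cdot z$ of $\overline{S\cdot z}$, hence everywhere, contradicting $h\in I(Y)$ with $Y\neq\varnothing$. Therefore $\chi\notin\Lambda$ while $-\chi\in\Lambda$, so $\Lambda$ fails to be a group. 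The main ingredient is the Lie--Kolchin step, which uses only solvability and connectedness of $S$; as a sanity check, for unipotent $S$ we have $X(S)=0$, so $\Lambda=\{0\}$ is trivially a group, recovering Rosenlicht's classical closedness theorem.
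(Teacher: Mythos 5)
Your argument is correct. Note, however, that the paper itself gives no proof of this theorem: it is quoted from \cite[Thm.\,4]{Po88}, so there is no in-text argument to compare with line by line. What you supply is a self-contained proof along the natural lines: for (a)$\Rightarrow$(b) you use that restriction to a closed subvariety of an affine variety is surjective and that a nowhere-vanishing regular function on an affine variety is a unit; for (b)$\Rightarrow$(a) you take the boundary $Y=\overline{S\cdot z}\setminus S\cdot z$ (nonempty, closed, $S$-stable since orbits are open in their closures), use local finiteness of the $S$-action on $k[\overline{S\cdot z}]$ plus Lie--Kolchin to produce a semi-invariant $h\in I(Y)$ with $h(z)\neq 0$, and then show that the resulting character lies in the semigroup while its negative does not. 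All the steps check out, including the implicit treatment of the degenerate case: since $0$ always lies in the semigroup, your contradiction argument automatically forces $\chi\neq 0$. Two points worth making explicit in a polished write-up: when you pass from $F^{-1}\in k[S\cdot z]$ (resp.\ from $h/h(z)$ and $g$ in $k[\overline{S\cdot z}]$) to membership in $\tau^*(k[Z])$, you are invoking the surjectivity of $k[Z]\to k[\overline{S\cdot z}]$, which you stated but should tie to the definition of the semigroup; and the finite-dimensional $S$-stable subspace $V$ should be taken inside $I(Y)$ (e.g.\ the span of the $S$-translates of one nonzero element of $I(Y)$), as you indicate. Compared with the paper's route, your proof is more elementary and self-contained, at the cost of reproving what \cite{Po88} establishes in a more general setting; the citation also feeds into Remark \ref{group}, whose convexity criterion is what the paper actually uses downstream, whereas your argument bypasses that machinery entirely.
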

\begin{proof} This is proved in \cite[Thm.\,4]{Po88}
\end{proof}

\begin{remark}\label{group} Since ${\rm X}(S)$ in Theorem \ref{clos} is a finitely generated free abelian group, it can be naturally regarded as a lattice in
 ${\rm X}(S)\otimes_{\mathbb Z}\mathbb Q$.\;Hence
the following general criterion is applicable
for verifying condition (b).

Let $M$ be a nonempty subset of a finite
dimensional vector space $V$ over $\mathbb Q$.\;Let ${\mathbb Q}_{\geqslant 0}M$,
${\rm conv} M$, and $\mathbb QM$ be, respectively, the convex cone generated by $M$,
 the convex hull of $M$, and  the linear span of $M$ in $V$.\;Then
the
following properties are equivalent (see \cite[p.\,386]{Po88}):
\begin{enumerate}[\hskip 2.2mm \rm(i)]
\item $0$ is an interior point of ${\rm conv} M$,
\item $\mathbb Q_{\geqslant 0} M=\mathbb Q M$.
\end{enumerate}
If $M$ is a subsemigroup of $V$, then
(i) and (ii) are equivalent to
\begin{enumerate}[\hskip -6.0mm \rm(iii)]
\item $M$ is a group.
\end{enumerate}
\end{remark}

\begin{proof}[Proof of Theorem {\rm \ref{sur}}]\

1. We consider the action of $G$ on its underlying algebraic variety by left
translations.\;By Theorem \ref{fg}, there is
an irreducible affine algebraic variety $X$ endowed with an action
of $G$ and a $G$-equivariant dominant morphism
\begin{equation}\label{alpha}
\alpha \colon G\to X\;\;\mbox{such that $\alpha^*$ is an isomorpshism $k[X]\xrightarrow{\cong}{\mathcal S}^+$.}
\end{equation}

Let $x:=\alpha(e)$.\;Since $\alpha$ is $G$-equivariant, we have
\begin{equation}\label{ag}
\alpha(g)=g\cdot x \;\;\mbox{for every $g\in G$},
\end{equation}
 and since $\alpha$ is dominant,
the orbit $G\cdot x$
is open and dense in $X$.\;Consider the canonical projection
$\pi \colon G\to G/U^+$.\;It is the geometric quotient for the action of $U^+$ on $G$ by right translations.\;Therefore, \eqref{S+} yields the isomorphism
\begin{equation}\label{pi*}
\pi^*\colon k[G/U^+]\xrightarrow{\cong}{\mathcal S}^+
\end{equation}
and,  since $\alpha$ is constant on the fibers $\pi$,
there exists a $G$-equivariant
morphism $\iota\colon G/U^+\to X$ such that
\begin{equation}\label{aip}
\alpha=\iota\circ\pi.
\end{equation}

From \eqref{aip} we infer that the image of $\iota$ is $G\cdot x$.\;Since the group $U^+$ is unipotent, the algebraic variety $G/U^+$ is quasiaffine
(see \cite[Thm.\,3]{Ro61}). There\-fore, $k(G/U^+)$ is the field of fractions
of $k[G/U^+]$.\;On the other hand, $k(X)$ is the field of fractions of $k[X]$ inasmuch as $X$ is affine.\;Using that \eqref{aip} and isomorphisms \eqref{alpha}, \eqref{pi*} yield the isomorphism
$\iota^*\colon k[X]\xrightarrow{\cong} k[G/U^+],$
we conclude that $\iota $ is a birational isomorphism.\;Therefore, for
a point $z$ in ge\-ne\-ral position in $G\cdot x$ the fiber $\iota^{-1}(z)$ is a single point.\;Being $G$-equivariant, $\iota $ is then injective.\;Finally, since $G$ is smooth, $k[G]$ is integrally closed; therefore, ${\mathcal S}^\pm$ is integrally closed as well in view of \eqref{S+} (see, e.g., \cite[Thm.\,3.16]{PV94}).\;Thus $X$ is normal, and hence by \;Zariski's Main Theorem, $\iota\colon G/U^+\to G\cdot x$ is an isomorphism.\;Using that $\pi$ is separable (see, e.g., \cite[II.6.5]{Bor91}), from this we infer that the following properties hold:
    \begin{enumerate}[\hskip 3.2mm\rm (i)]
      \item[$({\rm i}_1)$] $G_x=U^+$;
      \item[$({\rm ii}_1)$] $G\to G\cdot x$, $g\mapsto \alpha(g)=g\cdot x$, is a separable morphism.
    \end{enumerate}

\vskip 1mm

2. Let $y:={\overset{.}{w}}_0\cdot x$.\;Consider the $G$-equivariant morphism
\begin{equation}\label{beta}
    \beta\colon G\to X,\;\;g\mapsto g\cdot y.
\end{equation}
From \eqref{ag}, \eqref{beta}, $({\rm i}_1)$, and $({\rm ii}_1)$ we infer that the following properties hold:
 \begin{enumerate}[\hskip 4.8mm\rm (i)]
      \item[$({\rm i}_2)$] $G_y=U^-$;
      \item[$({\rm ii}_2)$] $G\to G\cdot y$, $g\mapsto \beta(g)=g\cdot y$, is a separable morphism;
      \item[$({\rm iii}_2)$] $\beta^*$ is an isomorphism  $k[X]\xrightarrow{\cong} {\mathcal S}^-$.
    \end{enumerate}

    \vskip 1mm

3. Now consider the $G$-equivariant  morphism
\begin{equation}\label{gamma}
    \gamma:=\alpha\times\beta\colon G\to X\times X,\quad g\mapsto g\cdot z,\;\;\mbox{where $z:=(x, y)$.}
\end{equation}
From \eqref{gamma} and $({\rm i}_1)$, $({\rm i}_2)$ we obtain
\begin{equation}\label{trivial}
G_z=G_x\cap G_y
=U^+\cap U^-
=\{e\},
\end{equation}
hence $\gamma$ is injective.
We claim that $\gamma$ is a closed embedding, i.e.,
\begin{enumerate}[\hskip 4.2mm\rm(a)]
  \item $G\to G\cdot z$, $g\mapsto g\cdot z$ is an isomorphism;
  \item $G \cdot z$ is closed in $X\times X$.
\end{enumerate}

If this claim is proved, then the proof of Theorem \ref{sur}
is completed as follows.\;Consider the isomorphism
\begin{equation}\label{theta}
   k[X]\otimes_kk[X]\to k[X\times X],\quad f\otimes h\mapsto fh.
\end{equation}
Then \eqref{mu}, \eqref{alpha}, $({\rm iii}_3)$, \eqref{gamma}, \eqref{theta} imply that $\mu$ is the composition of the homomorphisms
\begin{equation}\label{ep}
    {\mathcal S}^+\otimes_k{\mathcal S}^-\xrightarrow[\cong]{(\alpha^*)^{-1}\otimes (\beta^*)^{-1}}
k[X]\otimes_kk[X]
\xrightarrow[\cong]{\eqref{theta}}
k[X\times X]\xrightarrow{\gamma^*}k[G].
\end{equation}
Hence the surjectivity of $\mu$ is equivalent to the surjectivity of $\gamma^*$.\;By Lemma
\ref{ce},
the latter is equivalent to the property that $\gamma$ is a closed embedding, i.e., that properties (a) and (b) hold.

Thus the proof of Theorem \ref{sur} is reduced to proving properties (a) and (b).

\vskip 1mm

4. First, we shall prove property (a).\;Since $\gamma$ is injective, this is reduced to proving separability of $\gamma$.\;In turn, in view of \eqref{gamma}, the latter is reduced to proving that ${\rm ker}(d\gamma)_e$ is contained in ${\rm Lie}\,G_z$, i.e., that ${\rm ker}(d\gamma)_e=\{0\}$
because of \eqref{trivial} (see \cite[II.6.7]{Bor91}).\;Using
loc.cit., from \eqref{ag}, \eqref{beta},  $({\rm i}_1)$, $({\rm ii}_1)$,  $({\rm i}_2)$, $({\rm ii}_2)$ we infer that ${\rm ker}(d\alpha)_e\subseteq {\rm Lie}\,U^+$, ${\rm ker}(d\beta)_e\subseteq {\rm Lie}\,U^-$.\;In view of \eqref{gamma}, we then have
${\rm ker}(d\gamma)_e\!=\!{\rm ker}(d\alpha)_e\cap {\rm ker}(d\beta)_e\!\subseteq\!
{\rm Lie}\,U^+\cap {\rm Lie}\,U^-=\{0\}$.\;This proves property (a).

\vskip 1mm

5. Now we shall prove property (b).\;Actually, we shall prove the stronger property that the orbit $B^+\cdot z$ is closed in $X\times X$: since the algebraic variety $G/B^+$ is complete, this stronger property implies property (b) (see \cite[Sect.\,2.13, Lemma\,2]{St74}).\;Using that $B^+$ is connected solvable, to this end we shall apply Theorem \ref{clos}.

Namely, consider the morphism $\tau\colon B^+\to X\times X$, $b\mapsto b\cdot z$ and
the following subsemi\-group $M$ in ${\rm X}(B^+)$:
\begin{equation*}\label{M}
    M:=\{\lambda \!\in\! {\rm X}(B^+)\mid \mbox{
    the function $B^+\to k$, $b\mapsto b^{\lambda}$ lies in $\tau^{*}(k[X\times X])$.}\}
\end{equation*}
We identify ${\rm X}(B^+)$ with the lattice in $L:={\rm X}(B^+)\otimes _{\mathbb Z}\mathbb Q$.\;In view of Theorem \ref{clos} and Remark \ref{group}, the orbit $B^+\cdot z$ is closed if and only if
\begin{equation}\label{=}
\mathbb Q_{\geqslant 0} M=\mathbb Q M.
\end{equation}
Given this, the problem is reduced to proving that property \eqref{=} holds.\;This is done below.

\vskip 1mm

6. Since  $\tau=\gamma|^{\ }_{B^+}$, the algebra $\tau^*(k[X\times X])$ is the image of the homomorphism $\gamma^*(k[X\times X])\to k[B^+]$, $f\mapsto f|^{\ }_{B^+}$.\;From \eqref{ep} we see that
$\gamma^*(k[X\times X])$ contains ${\mathcal S}^+$ and ${\mathcal S}^-$.\;Hence the restrictions of
${\mathcal S}^+$ and ${\mathcal S}^+$ to $B^+$ lie in $\tau^*(k[X\times X])$.\;We
shall exhibit some characters of $B^+$ lying in these restrictions.

First consider the restriction of ${\mathcal S}^+(\lambda)$ to $B^+$ for $\lambda\in \XT+$.\;Note that ${\mathcal S}^+(\lambda)$ contains a function $f$ such that $f(e)\neq 0$.\;Indeed, in view of \eqref{Sm}(i) and Borel's fixed point theorem, ${\mathcal S}^+(\lambda)$ contains a $B^-$-stable line $\ell$.\;The group $B^-$ acts on $\ell$ by means of
a character $\nu\in {\rm X}(B^-)$.\;Take a nonzero function $f\in \ell$.\;For every $b\in B^-$, $u\in U^+$, we then have $f(b^{-1}u)=b^\nu f(u)\overset{\eqref{S+}}{=}b^\nu f(e)$; whence $f(e)\neq 0$ because
$B^-U^+$ is dense in $G$.\;This proves the existence of $f$.\;Multiplying $f$ by $1/f(e)$, we may assume
that $f(e)=1$.\;Then  for every $b\in B^+$, we deduce from $\eqref{S+}$, $\eqref{S+la}$ that
$f(b)=b^\lambda f(e)=b^{\lambda}$, i.e., $f|^{\ }_{B^+}$ is the character $B^+\!\to k$, $b\mapsto b^\lambda$.
This proves the inclusion
 \begin{equation}\label{M+}
{\rm X}(B^+)^{\ }_+\subseteq M.
 \end{equation}

Now consider the restriction of ${\mathcal S}^-(\lambda)$ to $B^+$ for $\lambda\in \XT+$.\;In view of \eqref{Sm}(ii), there is a $B^+$-stable line $\ell$  in ${\mathcal S}^-(\lambda)$, on which $B^+$ acts by the character $\lambda^*\in {\rm X}(B^+)$.\;Take a nonzero function $f\in \ell$.\;We may assume that
$f(e)=1$: this is proved as above with $\nu=\lambda$, replacing $B^-$ by $B^+$, and $U^+$ by
$U^-$.\;For every $b\in B^+$, we then have $f(b^{-1})=b^{\lambda^*}$, i.e., $f|^{\ }_{B^+}$ is the character $B^+\!\to k$, $b\mapsto b^{-\lambda^*}=b^{w_0(\lambda)}$.\;This proves the inclusion
 \begin{equation}\label{M-}
-{\rm X}(B^+)^{\ }_+\subseteq M.
 \end{equation}

 Since ${\mathbb Q}^{\ }_{\geqslant 0}({\rm X}(B^+)^{\ }_+)-{\mathbb Q}^{\ }_{\geqslant 0}({\rm X}(B^+)^{\ }_+)=L$, the inclusions \eqref{M+}, \eqref{M-} imply the equality ${\mathbb Q}^{\ }_{\geqslant 0}M=L$; whence a fortiori the equality \eqref{=} holds.
This completes the proof of Theorem \ref{sur}.
\end{proof}

\section{Proof of Conjecture (K)}

We now intend to describe  the ideal ${\rm ker}\,\mu$ in ${\mathcal S}^+\otimes_k {\mathcal S}^-$.\;This is done in Sections 3 and 4 in several steps: first in Theorem \ref{ker} we prove that ${\rm ker}\,\mu$
is generated by $({\rm ker}\,\mu)^G$, then in Theorem \ref{vect} we describe ${\rm ker}\,\mu$ as a vector space, and finally in Theorem \ref{gi} we find a standard finite generating set of ${\rm ker}\,\mu$.

The first step is based on the following general statement:
\begin{theorem}\label{Luna}
Let $Z$ be an affine algebraic variety endowed with an action of a reductive algebraic group $H$.\;Let $a\in Z$ be a point such that the orbit morphism
\begin{equation*}
\varphi\colon H\to Z,\;\;h\mapsto h\cdot a
\end{equation*}
 is a closed embedding.\;Then the ideal ${\rm ker}\,\varphi^*$ in $k[Z]$ is generated by
 $({\rm ker}\,\varphi^*)^H$.
\end{theorem}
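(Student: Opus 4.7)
The plan is to identify $H$ with its image $Y := \varphi(H)$, a closed $H$-stable subvariety of $Z$, write $I := \ker \varphi^*$ for its vanishing ideal, and consider the categorical quotient $\pi \colon Z \to Q := Z /\!/ H = \operatorname{Spec} k[Z]^H$ with $\bar a := \pi(a)$. The first observation is that $I^H = \mathfrak m_{\bar a}$, the maximal ideal of $\bar a$ in $k[Z]^H$: any $H$-invariant function vanishes on $Y$ iff it vanishes at $a$, and evaluation at $a$ maps $k[Z]^H$ surjectively onto $k$. Hence $I^H \cdot k[Z] = \mathfrak m_{\bar a} \cdot k[Z]$ is the ideal defining the scheme-theoretic fiber $\pi^{-1}(\bar a)$, and the desired identity $I = I^H \cdot k[Z]$ reduces to the scheme-theoretic equality $Y = \pi^{-1}(\bar a)$ of closed subschemes of $Z$.

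First I would verify the set-theoretic version. If $b \in \pi^{-1}(\bar a) \setminus Y$, then the unique-closed-orbit-in-a-fiber property (which holds for reductive group actions on affine varieties) forces $Y \subseteq \overline{H \cdot b}$; but $\dim \overline{H \cdot b} = \dim H \cdot b \leqslant \dim H = \dim Y$, so by irreducibility $Y = \overline{H \cdot b}$, contradicting $b \notin Y$. Hence $\pi^{-1}(\bar a) = Y$ as sets.

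For the scheme-theoretic upgrade I would invoke Luna's \'etale slice theorem (in positive characteristic, its Bardsley--Richardson extension, whose separability hypothesis on the stabilizer is trivially satisfied here since $H_a = \{e\}$). This produces an $H$-saturated affine open neighborhood $U$ of $Y$ in $Z$ together with an affine slice $S \ni a$ fitting into a Cartesian square
\[
\begin{CD}
H \times S @>{\psi}>> U \\
@VVV @VV{\pi|_U}V \\
S @>{\bar \psi}>> U /\!/ H
\end{CD}
\]
in which $\psi$ and $\bar\psi$ are \'etale and $H$ acts on $H \times S$ by left translation on the first factor. Reading off scheme-theoretic fibers over $\bar a$: the Cartesian property identifies $\pi|_U^{-1}(\bar a)$ with the fiber $H \times \{a\}$ of the left vertical arrow, which under $\psi$ maps isomorphically onto $Y$. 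Saturation of $U$ (i.e.\ $U = \pi^{-1}(\pi(U))$) then yields $\pi^{-1}(\bar a) = Y$ scheme-theoretically in $Z$, whence $I = I^H \cdot k[Z]$.

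The hard part will be this last scheme-theoretic step. A naive invariant-theoretic attempt, exploiting the exactness of $(-)^H$ on short exact sequences of $H$-modules for reductive $H$, quickly shows that the $H$-module $I/\mathfrak m_{\bar a} k[Z]$ has zero $H$-invariants; but in positive characteristic this is too weak to force the module itself to vanish. The \'etale slice theorem circumvents the difficulty by reducing the problem to the transparent product situation $H \times S$, where the analogous ideal equality is manifest.
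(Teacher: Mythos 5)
Your proposal is correct, and it rests on the same two external inputs as the paper's proof --- the uniqueness of the closed orbit in a fiber of $\pi\colon Z\to Z/\!\!/H$ (giving the set-theoretic equality $\pi^{-1}(\bar a)=H\cdot a$) and the Bardsley--Richardson \'etale slice theorem --- but it exploits the slice theorem differently at the decisive step. The paper first reduces to the case of a smooth irreducible $Z$ via an equivariant closed embedding into a linear representation, uses the slice only to extract infinitesimal data (smoothness of $\pi(a)$ in $Z/\!\!/H$ and, by equivariance, surjectivity of $d_z\pi$ at every point $z$ of the fiber), and then runs a separate local-parameter argument (Lemma \ref{idealm}) to show that the vanishing ideal of $\pi^{-1}(\pi(a))$ is generated by $\pi^*(\mathfrak m)$. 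You instead base-change the Cartesian square of the slice theorem along $\{a\}\hookrightarrow S$, which identifies the scheme-theoretic fiber $\pi|_U^{-1}(\bar a)$ with $H\times\{a\}$ and hence shows at once that this fiber is reduced and coincides with $Y$; this bypasses both the smoothness reduction and Lemma \ref{idealm}, and is arguably the cleaner route (the paper's version, in exchange, needs only the \'etaleness of the slice maps at one point plus an elementary lemma). Two small points to tighten: the separability required by the slice theorem comes from the hypothesis that $\varphi$ is a closed embedding, not from triviality of $H_a$ as an abstract group (which in characteristic $p$ would not by itself exclude inseparability); and the final step ``saturation yields $\pi^{-1}(\bar a)=Y$ scheme-theoretically in $Z$'' deserves one more line --- the ideals $I$ and $\mathfrak m_{\bar a}k[Z]$ agree after localization at every point of $Z$ (at points of $Y\subseteq U$ by the slice computation, since $\mathfrak m_{\bar a}$ generates the ideal sheaf of $\bar a$ over the affine quotient, and at points off $Y$ both localize to the unit ideal), hence they are equal; note also that affineness of $U$ is neither guaranteed in this formulation nor needed for your argument.
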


For the proof of Theorem \ref{Luna} we need the following

\begin{lemma}\label{idealm}
Let $\psi\colon Y\to Z$ be a morphism of irreducible affine algebraic varieties and let $z\in \psi(Y)$ be a smooth point of
$Z$.\;Assume that for each point $y\in \psi^{-1}(z)$
the following hold{\rm:}
\begin{enumerate}[\hskip 2.2mm\rm(i)]
\item $y$ is a smooth point of $\,Y$;
\item
the differential
$d_y\psi$ is surjective.
\end{enumerate}
 Then the ideal
$\{f\in k[Y]\mid f|_{\psi^{-1}(z)}=0\}$
of $k[Y]$ is generated by $\psi^*(\mathfrak m)$, where
$\mathfrak m:=\{h\in k[Z]\mid h(z)=0\}$.
\end{lemma}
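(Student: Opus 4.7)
The easy inclusion is immediate: for any $h\in\mathfrak m$, the function $\psi^*(h)=h\circ\psi$ vanishes on $\psi^{-1}(z)$, so $\psi^*(\mathfrak m)k[Y]\subseteq J$, where I write $J:=\{f\in k[Y]\mid f|_{\psi^{-1}(z)}=0\}$. The whole content is the reverse inclusion, and my plan is to verify it locally at every maximal ideal of $k[Y]$, using hypotheses (i) and (ii) to ensure that the local ideal on the left is already prime (in particular, radical).

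First, I would reduce to a local question. Setting $I:=\psi^*(\mathfrak m)k[Y]$, the set of maximal ideals containing $I$ coincides with the set of maximal ideals containing $J$ (both describe the closed points of $\psi^{-1}(z)$), because $I\subseteq\mathfrak n_{y'}$ forces every $h\in\mathfrak m$ to vanish at $\psi(y')$, hence $\psi(y')=z$. Consequently $I_{\mathfrak n}=J_{\mathfrak n}=k[Y]_{\mathfrak n}$ for every maximal ideal $\mathfrak n$ not associated to a point of $\psi^{-1}(z)$, and the whole problem collapses to showing $I_{\mathfrak n_y}=J_{\mathfrak n_y}$ for each $y\in\psi^{-1}(z)$.

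For such a $y$, I would invoke the smoothness hypotheses to describe $I\cdot\mathcal O_{y,Y}$ explicitly. Since $z$ is a smooth point of the irreducible variety $Z$, the local ring $\mathcal O_{z,Z}$ is regular of dimension $m:=\dim Z$, so $\mathfrak m\cdot\mathcal O_{z,Z}=(h_1,\ldots,h_m)$ for a regular system of parameters $h_1,\ldots,h_m$. Because $y$ is smooth on $Y$, the cotangent space $\mathfrak n_y/\mathfrak n_y^2$ has dimension equal to $\dim Y$; surjectivity of $d_y\psi:T_yY\to T_zZ$ is equivalent to injectivity of the dual map $\mathfrak m/\mathfrak m^2\to\mathfrak n_y/\mathfrak n_y^2$, which says exactly that the images of $\psi^*(h_1),\ldots,\psi^*(h_m)$ are linearly independent in $\mathfrak n_y/\mathfrak n_y^2$. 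Thus $\psi^*(h_1),\ldots,\psi^*(h_m)$ form part of a regular system of parameters in the regular local ring $\mathcal O_{y,Y}$. It is a standard consequence that then $\mathcal O_{y,Y}/(\psi^*(h_1),\ldots,\psi^*(h_m))$ is itself a regular local ring, hence an integral domain, so
\[
I\cdot\mathcal O_{y,Y}=\bigl(\psi^*(h_1),\ldots,\psi^*(h_m)\bigr)
\]
is a prime, in particular radical, ideal. Since its radical equals the ideal of functions vanishing on the zero locus of $I$ near $y$, namely $J\cdot\mathcal O_{y,Y}$, the two local ideals coincide, which completes the proof.

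The only real obstacle is the translation from the geometric hypothesis on $d_y\psi$ to the algebraic statement that $\psi^*(h_1),\ldots,\psi^*(h_m)$ extend to a regular system of parameters; this is where both smoothness assumptions (i), (ii) enter simultaneously through the regularity of $\mathcal O_{y,Y}$ and the Nakayama-type identification of the cotangent space. Everything else is a localization argument and the basic fact that a quotient of a regular local ring by part of a regular system of parameters is again regular, hence a domain.
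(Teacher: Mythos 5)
Your proof is correct, and it is a genuinely different way of globalizing the same key local computation that the paper uses. Both arguments hinge on the pointwise fact that, for $y\in\psi^{-1}(z)$, smoothness of $z$ and surjectivity of $d_y\psi$ force the pullbacks of a regular system of parameters at $z$ to extend to a regular system of parameters of $\mathcal O_{y,Y}$. The paper converts this, via the cited theorem of Shafarevich on subvarieties cut out by functions with independent differentials together with the fiber dimension theorem, into the statement that on a principal open neighbourhood $U$ of every point of $Y$ the ideal of $\psi^{-1}(z)\cap U$ in $k[U]$ is generated by the pulled-back generators of $\mathfrak m$, and then patches a finite cover of such $U$'s by a Nullstellensatz partition of unity $1=\sum_i q_ih_i^{b_i}$ to produce an explicit global expression of $f$ in terms of the $t_i$. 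You instead note that the assertion amounts to $I:=\psi^*(\mathfrak m)k[Y]$ being radical (since $V(I)=\psi^{-1}(z)$, the Nullstellensatz gives $J=\sqrt I$), reduce equality of $I$ and $J$ to their localizations at maximal ideals, and show $I\mathcal O_{y,Y}$ is prime because a quotient of a regular local ring by part of a regular system of parameters is again regular; commutation of radical with localization then closes the argument without any covering or dimension count. Your route buys a shorter, purely local-algebraic proof; the paper's route stays closer to elementary geometry of varieties and yields the generating identity explicitly. Two points you leave implicit but should record: the parameters $h_1,\ldots,h_m$ may be chosen inside $\mathfrak m\subseteq k[Z]$, and since $\psi(y)=z$ the map $\psi^*$ induces a local homomorphism $\mathcal O_{z,Z}\to\mathcal O_{y,Y}$, so these parameters really do generate $I\mathcal O_{y,Y}$; and your phrase ``the ideal of functions vanishing on the zero locus of $I$ near $y$'' is justified precisely by the global equality $J=\sqrt I$ together with $\sqrt{I}\,\mathcal O_{y,Y}=\sqrt{I\mathcal O_{y,Y}}$.
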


\begin{proof}
Given  a nonzero function $f\in k[Y]$, below we denote by $Y_f$  the principal open subset $\{y\in Y\mid f(y)\neq 0\}$ of $Y$;\;it is affine and  $k[Y_f]=k[Y]_f$.

\vskip 1mm

1.\;Let $s_1,\ldots, s_d$ be a system of generators of the ideal $\mathfrak m$ of $k[Z]$.\;Put $t_i:=\psi^*(s_i)$.\;Then we have
\begin{equation}\label{fifi}
\{y\in Y\mid t_1(y)=\cdots=t_d(y)=0\}=\psi^{-1}(z).
\end{equation}

We claim  that, for every point $a\in Y$, there is a function $h_a\in k[Y]$ such that the principal open subset $U=Y_{h_a}$ is a neighborhood of $a$ and  $I^{\ }_U:=\{f\in k[U]\mid
f|_{\psi^{-1}(z)\cap U}=0\}$ is the ideal of $k[U]$
generated by $t_1|_U,\ldots, t_d|_U$.

Proving this, we consider two cases.

First,
 consider the case where $a\notin \psi^{-1}(z)$.\;Then any principal open neighborhood of $a$ not intersecting
$\psi^{-1}(z)$ may be taken as $U$ because in this case $I^{\ }_U=k[U]$ and, in view of \eqref{fifi} and Hilbert's Nullstellensatz,
$k[U]=
k[U]t_1|_U+\cdots+ k[U]t_d|_U$.

Second,
 consider the case where $a\in \psi^{-1}(z)$.\;Let $n=\dim Y, m=\dim Z$. Since $a$ and $z$ are the smooth points, the assumption (ii) yields the equality
\begin{equation}\label{n-m}
\dim {\rm ker}\,d_a\psi=n-m.
\end{equation}
The functions $s_1,\ldots, s_d$ generate the maximal ideal of ${\mathcal O}_{z, Z}$.\;Therefore,
re\-num\-bering them if necessary, we may (and shall) assume that $s_1,\ldots, s_m$ is a system of local parameters of $Z$ at $z$, i.e., $\bigcap_{i=1}^m{\rm ker}\,d_zs_i=\{0\}$.\;Since $d_at_i=d_a\psi\circ d_zs_i$, we then infer from (ii)  that $\bigcap_{i=1}^m{\rm ker}\,d_at_i={\rm ker}\,d_a\psi$.\;In view of \eqref{n-m}, the latter equality implies the existence of functions $f_1,\ldots, f_{n-m}\in {\mathcal O}_{a, Y}$ such that
$t_1,\ldots, t_m, f_1,\ldots, f_{n-m}$ is a system of local parameters of $Y$ at $a$.\;Let
\begin{equation}\label{part}
F:=\{y\in Y\mid t_1(y)=\cdots=t_m(y)=0\}.
\end{equation}
By \cite[Chap.\,II, \S3, Sect.\,2, Thm.\,4]{Sh13}, there is a principal open neighborhood $U$ of $a$ such that
$F\cap U$ is an irreducible smooth $(n-m)$-dimensional closed subvariety of $U$ whose ideal in $k[U]$ is generated by $t_1|_U,\ldots, t_m|_U$.\;On the other hand, \eqref{fifi} and \eqref{part} yield $\psi^{-1}(z)\subseteq F$ and, by the fiber dimension theorem,  every irreducible component of
$\psi^{-1}(z)$ has dimension $\geqslant n-m$.\;Hence $U\cap F=\psi^{-1}(z)\cap U$.\;This and \eqref{fifi}
prove the claim.

\vskip 1mm

2.\;Using this claim, the proof of Lemma \ref{idealm} is completed as follows. Since $Y=\bigcup_{a\in Y} Y_{h_a}$ and $Y$ is quasi-compact, there exists a finite set of points $a_1,\ldots, a_r\in Y$ such that
\begin{equation}\label{covering}
Y=\displaystyle  \bigcup_{i=1}^r Y_{h_{i}},\;\;\mbox{where $h_i:=h_{a_i}$.}
\end{equation}

Now, let $f\in k[Y]$ be a function such that $f|_{\psi^{-1}(z)}=0$.\;Then, in view of the definition of $h_a$, for every $i=1,\ldots, r$,  we have
\begin{equation}\label{decomps}
fh_i^{b_i}=c_{i,1}t_1+\cdots +c_{i,d}t_d\quad\mbox{for some $c_{i,j}\in k[Y]$ and $b_i\in{\mathbb N}$}.
\end{equation}
From \eqref{covering} and Hilbert's Nullstellensatz we infer that there are functions $q_1,\ldots, q_r\in k[Y]$
such that
\begin{equation}\label{pe}
1=q_1h_1^{b_1}+\cdots + q_sh_s^{b_r}.
\end{equation}
From \eqref{decomps} and \eqref{pe} we then deduce that
\begin{equation*}
f=\displaystyle \bigg(\sum_{i=1}^rq_ic_{i,1}\bigg)t_1+\cdots+\bigg(\sum_{i=1}^rq_ic_{i,d}\bigg)t_d\in
k[Y]t_1+\cdots+k[Y]t_d.
\end{equation*}
This completes the proof.
\end{proof}

\begin{proof}[Proof of Theorem {\rm \ref{Luna}}]
There is a closed equivariant embedding of $Z$ in an affine space on which $H$ operates linearly (see \cite[Lemma 2]{Ro61}, \cite[Thm.\,1.5]{PV94}).\;Hence we may (and shall) assume that $Z$ is an irreducible smooth affine algebraic variety.

Since $G$ is reductive, $k[Z]^G$ is a finitely generated $k$-algebra
(see, e.g., \cite[Thm.\,A.1.0]{MF82} and the references therein).\;Denote by
 $Z/\!\!/H$ the affine algebraic variety ${\rm Specm}(k[Z]^G)$ and by
 $\pi
 \colon Z\to Z/\!\!/H$ the morphism corresponding to the inclusion homomorphism
 $k[Z]^G\hookrightarrow k[Z]$.

The condition on the point $a$ implies that its $H$-stabilizer is trivial,
\begin{equation}\label{Ha}
H_a=\{e\}.
\end{equation}
Hence $H\cdot a$ is a closed  $H$-orbit of maximal dimension.\;Taking into account that in
 every fiber of $\pi$ there is a unique closed
 orbit lying in the closure of every orbit contained in this fiber (see
 \cite[Cors.\,1.2, A.1.0]{MF82}), from this we deduce the equality
\begin{equation}\label{of}
\pi^{-1 }
(\pi
(a))=H\cdot a.
\end{equation}

Since  the group $\{e\}$ is linearly reductive, from
\eqref{Ha} and
the separability of $\varphi$ we infer
by \cite[Prop.\,7.6]{BR85}  that there is  a smooth affine subvariety $S$ of the $H$-variety $Z$, which is an \'etale slice at $a\in S$.\;In view of \eqref{Ha}, this means the following:
\begin{enumerate}[\hskip 2.2mm\rm(i)]
\item $\pi|^{\ }_S\colon S\to Z/\!\!/H$ and $\psi\colon H\times S\to Z$, $(h, s)\mapsto h\cdot s$  are the    \'etale
morphisms;
\item the diagram
\begin{equation*}\label{cd}
\begin{matrix}
\xymatrix{ H\times S\ar[r]^{\hskip 3mm \psi}\ar[d]_{{\rm pr}^{\ }_2}& Z\ar[d]^{\pi}\\
S\ar[r]^{\pi|^{\ }_S}&Z/\!\!/H
}
\end{matrix}
\end{equation*}
is a Cartesian square, i.e., it is commutative and the morphism $H\times S \to S\times_{Z/\!\!/H} Z$ determined by $\psi$ and ${\rm pr}_2$ is an isomorphism.
\end{enumerate}

From (i) and (ii) we deduce  that $\pi(a)$ is a smooth point of $Z/\!\!/H$ and the differentials $d_{(e,a)}\psi$, $d_a(\pi|^{\ }_{S})$ are isomorphisms.\;Since $d_{(e,a)}{{\rm pr}^{\ }_2}$ is clearly surjective, (ii)
then implies that
$d_a\pi$ is surjective, too.

Now, in view of \eqref{of} and transitivity of the action of $H$ on $H\cdot a$, we conclude that $d_z\pi$ is surjective for every point $z\in \pi^{-1}(\pi(a))$.\;In view of Lemma \ref{idealm}, this implies the claim of Theorem \ref{Luna}.
\end{proof}

\begin{theorem}\label{ker}
The ideal ${\rm ker}\,\mu$ in ${\mathcal S}^+\otimes_k {\mathcal S}^-$
is generated by $({\rm ker}\,\mu)^G$.
\end{theorem}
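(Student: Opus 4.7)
The plan is to deduce Theorem \ref{ker} as a direct corollary of Theorem \ref{Luna} together with the geometric setup already built in the proof of Theorem \ref{sur}. Indeed, in that proof we constructed an irreducible affine $G$-variety $X$ together with $G$-equivariant morphisms $\alpha,\beta\colon G\to X$ given by $\alpha(g)=g\cdot x$, $\beta(g)=g\cdot y$ (where $y={\overset{.}{w}}_0\cdot x$), such that $\alpha^*$ and $\beta^*$ are isomorphisms onto $\mathcal{S}^+$ and $\mathcal{S}^-$ respectively. Considering $X\times X$ with the diagonal $G$-action and the orbit morphism $\gamma=\alpha\times\beta\colon G\to X\times X$, $g\mapsto g\cdot z$ with $z=(x,y)$, we already established that $\gamma$ is a closed embedding, and that $\mu$ factors as the composition \eqref{ep} in which the first two arrows are isomorphisms and the third is $\gamma^*$.

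The key idea is that the sequence of isomorphisms in \eqref{ep}
\begin{equation*}
{\mathcal S}^+\otimes_k{\mathcal S}^-\;\xrightarrow[\cong]{(\alpha^*)^{-1}\otimes(\beta^*)^{-1}}\;k[X]\otimes_k k[X]\;\xrightarrow[\cong]{\eqref{theta}}\;k[X\times X]
\end{equation*}
is $G$-equivariant, where $G$ acts on $\mathcal{S}^\pm$ by left translations and on $k[X\times X]$ via the diagonal action on $X\times X$. This $G$-equivariance holds because $\alpha$ and $\beta$ are $G$-equivariant and the diagonal action is compatible with the tensor decomposition. Consequently the ideals $\ker\mu\subseteq\mathcal{S}^+\otimes_k\mathcal{S}^-$ and $\ker\gamma^*\subseteq k[X\times X]$ correspond under this isomorphism as $G$-submodules, and the subspaces of $G$-invariants likewise correspond.

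Now I would apply Theorem \ref{Luna} with $(Z,H,a):=(X\times X,G,z)$: since $\gamma$ is the orbit morphism at $z$ and is a closed embedding (by what was proved in Theorem \ref{sur}, steps 4--5), the hypothesis is fulfilled, and the conclusion is that $\ker\gamma^*$ is generated as an ideal of $k[X\times X]$ by $(\ker\gamma^*)^G$. Transporting this generation statement back across the $G$-equivariant isomorphism above yields that $\ker\mu$ is generated as an ideal of $\mathcal{S}^+\otimes_k\mathcal{S}^-$ by $(\ker\mu)^G$, which is the assertion of Theorem \ref{ker}.

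There is essentially no further obstacle, since all the hard work has been done: the closed embedding property of $\gamma$ (the only nontrivial hypothesis of Theorem \ref{Luna}) was the content of Theorem \ref{sur}, and Theorem \ref{Luna} itself has already been established via the étale slice argument and Lemma \ref{idealm}. The only mild point to verify carefully is the $G$-equivariance of the isomorphism in \eqref{ep}, but this is immediate from the $G$-equivariance of $\alpha$ and $\beta$ and the fact that the canonical multiplication $k[X]\otimes_k k[X]\to k[X\times X]$ intertwines the tensor $G$-action with the diagonal $G$-action.
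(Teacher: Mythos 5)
Your proposal is correct and follows exactly the paper's own argument: it factors $\mu$ through the $G$-equivariant isomorphisms of \eqref{ep} onto $\gamma^*$, uses the closed-embedding property of $\gamma$ established in the proof of Theorem \ref{sur}, and applies Theorem \ref{Luna} to $(X\times X, G, z)$ before transporting the generation statement back to $\ker\mu$. The only difference is that you spell out the $G$-equivariance of the identification of $\ker\mu$ with $\ker\gamma^*$, which the paper leaves implicit.
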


\begin{proof} In the proof of Theorem \ref{sur} we have shown that
\begin{enumerate}[\hskip 2.2mm ---]
\item the homomorphism $\mu$ is the composition of the homomorphisms \eqref{ep};
\item the morphism $\gamma$
is a closed
embedding.
\end{enumerate}
In view of these facts, Theorem \ref{ker} is equivalent to the claim that the ideal
${\rm ker}\,\gamma^*$ in $k[X\times X]$ is generated by $({\rm ker}\,\gamma^*)^G$.\;This
claim
follows from Theorem\;\ref{Luna}.
\end{proof}

\section{Structure of $({\rm ker}\,\mu)^G$}

We shall use the following lemma for describing $({\rm ker}\,\mu)^G$ as a vector space.
\begin{lemma}\label{otimes}
\begin{align}
\dim({\mathcal S}^+(\lambda)\otimes_k {\mathcal S}^-(\nu)\!)^G&=
\Big\{\hskip -1.5mm\begin{array}{ll}
1& \mbox{if $\;\nu=\lambda^*$,}\\
0& \mbox{if $\;\nu\neq\lambda^*$}
\end{array}
\;\;
\mbox{for every $\lambda, \nu\in \XT+$},\label{10}\\
({\mathcal S}^+\otimes_k{\mathcal S}^-)^G&=\displaystyle \bigoplus_{\lambda\in \XT+}({\mathcal S}^+(\lambda)\otimes_k {\mathcal S}^-(\lambda^*)\!)^G.\label{G=}
\end{align}
\end{lemma}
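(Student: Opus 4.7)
The plan is to reduce \eqref{10} to the Weyl-module/induced-module reciprocity
$\dim_k\,{\rm Hom}_G(V(\mu),H^0(\lambda))=\delta_{\mu,\lambda}$ of \cite[II.4.13]{Ja87}, and then to obtain \eqref{G=} by taking $G$-invariants termwise in the tensor of the two gradings \eqref{grS+-}.

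First I would pin down the $G$-module structure on each factor. The right translation by ${\overset{.}{w}}_0$ (noted just before \eqref{H}) gives a $G$-module isomorphism ${\mathcal S}^+(\lambda)\cong{\mathcal S}^-(\lambda)$, and \eqref{H} yields ${\mathcal S}^-(\lambda)=H^0(\lambda^*)$; hence ${\mathcal S}^+(\lambda)\cong H^0(\lambda^*)$ as $G$-modules. Because each ${\mathcal S}^\pm(\lambda)$ is finite-dimensional, the standard isomorphism $(A\otimes_k B)^G\cong {\rm Hom}_G(A^*,B)$, applied with $A={\mathcal S}^+(\lambda)$ and $B={\mathcal S}^-(\nu)$, combined with the identification $H^0(\mu)^*\cong V(-w_0(\mu))$ of the dual of an induced module as a Weyl module, produces
$$({\mathcal S}^+(\lambda)\otimes_k{\mathcal S}^-(\nu))^G\cong{\rm Hom}_G(V(\lambda),H^0(\nu^*)).$$
By the cited reciprocity, this space has dimension $\delta_{\lambda,\nu^*}=\delta_{\nu,\lambda^*}$, which is \eqref{10}.

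For \eqref{G=}, the tensor product of the two $\XT+$-gradings \eqref{grS+-} is a $G$-stable decomposition
$${\mathcal S}^+\otimes_k{\mathcal S}^-=\bigoplus_{\lambda,\nu\in\XT+}{\mathcal S}^+(\lambda)\otimes_k{\mathcal S}^-(\nu),$$
and the functor $M\mapsto M^G$ commutes with direct sums (any invariant decomposes uniquely into its homogeneous components, each of which is itself invariant). Applying \eqref{10} therefore keeps only the diagonal terms with $\nu=\lambda^*$, yielding \eqref{G=}.

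I do not anticipate a serious obstacle: the argument is essentially bookkeeping on top of a standard result in Jantzen. The one point requiring care is the alignment of conventions—the definition $\lambda^*=-w_0(\lambda)$, the side on which $G$ acts, and the exact form of $H^0(\mu)^*\cong V(-w_0(\mu))$—so that the Kronecker symbol comes out as $\delta_{\nu,\lambda^*}$ rather than $\delta_{\nu,\lambda}$.
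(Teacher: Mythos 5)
Your proof is correct and takes essentially the same route as the paper: both dualize to identify $({\mathcal S}^+(\lambda)\otimes_k{\mathcal S}^-(\nu))^G$ with a Hom space out of the Weyl module ${\mathcal S}^+(\lambda)^*\cong V(\lambda)$, the only difference being that you cite the reciprocity $\dim\,{\rm Hom}_G(V(\lambda),H^0(\mu))=\delta_{\lambda\mu}$ (Jantzen II.4.13), whereas the paper uses the universal property ${\rm Hom}_G(V(\lambda),M)\cong (M^{U^+})_\lambda$ (II.2.13) together with the fact that ${\mathcal S}^-(\nu)^{U^+}$ is a line of $B^+$-weight $\nu^*$ — an immediately equivalent formulation. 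The deduction of \eqref{G=} from \eqref{10} via the $G$-stable bigrading coming from \eqref{grS+-} is the same in both.
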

\begin{proof}  In view of \eqref{grS+-}, the equality \eqref{G=} follows from \eqref{10}.\;To prove \eqref{10}, we note that
\begin{equation*}
({\mathcal S}^+(\lambda)\otimes_k {\mathcal S}^-(\nu)\!)^G\cong {\rm Hom}_G
({\mathcal S}^+(\lambda)^*, {\mathcal S}^-(\nu))
\end{equation*}
and, in view of \eqref{H}, the $G$-module
${\mathcal S}^+(\lambda)^*$ is
the universal highest weight module of weight $\lambda$
(the Weyl mo\-du\-le);
in particular,
for each $G$-module $M$, there is an isomorphism
\begin{equation}\label{we}
{\rm Hom}_G({\mathcal S}^+(\lambda)^*, M))\xrightarrow{\cong} (M^{U^+})_\lambda,
\end{equation}
where the right-hand side of \eqref{we} is the weight space of $T$  (see \cite[II.2.13, Lemma]{Ja87}).\;Sin\-ce ${\mathcal S}^-(\nu)^{U^+}$\;is a line
on which $B^+$ acts by means of $\nu^*$ (see \cite[II.2.2, Prop.]{Ja87}), this proves \eqref{10}.
\end{proof}

We identify $k[G]\otimes_k k[G]$ with $k[G\times G]$ by the isomorphism
\begin{equation}\label{ide}
k[G]\otimes_k k[G]\to k[G\times G],\quad
f_1\otimes f_2
\mapsto
((a,b)
\mapsto f_1(a)f_2(b)).
\end{equation}
Thus ${\mathcal S}^+\otimes_k {\mathcal S}^-$ is regarded as a subalgebra of $k[G\times G]$,
and \eqref{mu}, \eqref{ide} yield the equality
\begin{equation}\label{rst}
f(a,a)=\mu(f)(a)\;\;\mbox{for every $f\in {\mathcal S}^+\otimes_k {\mathcal S}^-$ and $a\in G$.}
\end{equation}

\begin{theorem}\label{vect}\

\begin{enumerate}[\hskip 2.2mm \rm (i)]
  \item If $f\in ({\mathcal S}^+\otimes_k {\mathcal S}^-)^G$, then
  $f-f(e,e)\in ({\rm ker}\,\mu)^G$.
  \item Every $h\in  ({\rm ker}\,\mu)^G$\;can be uniquely written in the form
  \begin{equation}\label{decom}
    h=\displaystyle \sum
    (h_{\lambda}-h_{\lambda}(e,e)),\quad\mbox{
    $h_\lambda\in ({\mathcal S}^+(\lambda)\otimes_k {\mathcal S}^-(\lambda^*)\!)^G$,}
  \end{equation}
the sum is taken over a finite set of nonzero elements $\lambda\in {\rm X}\hskip -.2mm (T)\hskip -.2mm_+\hskip -.2mm.$
  \end{enumerate}
\end{theorem}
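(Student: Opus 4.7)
The plan is to obtain part (i) directly from the identity \eqref{rst} and the definition of the $G$-action on $k[G\times G]$, and then to derive part (ii) as a formal consequence of (i) together with the decomposition \eqref{G=} furnished by Lemma \ref{otimes}.

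For (i), I observe that the $G$-action on ${\mathcal S}^+\otimes_k {\mathcal S}^-$ induced by left translations corresponds via the identification \eqref{ide} to the diagonal left-translation action on $G\times G$. Hence $G$-invariance of $f$ reads $f(g^{-1}a, g^{-1}b)=f(a,b)$ for all $g,a,b\in G$; specializing to $g=a$ and $b=a$ yields $f(a,a)=f(e,e)$. Combined with \eqref{rst}, this says $\mu(f)$ is the constant function $f(e,e)$, so $\mu(f-f(e,e))=0$; since the constant $f(e,e)$ is itself $G$-invariant, $f-f(e,e)\in({\rm ker}\,\mu)^G$.

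For (ii), given $h\in({\rm ker}\,\mu)^G$, I apply \eqref{G=} to write $h=\sum_{\lambda\in X(T)_+} h_\lambda$ as a (unique) finite sum with $h_\lambda\in({\mathcal S}^+(\lambda)\otimes_k {\mathcal S}^-(\lambda^*))^G$. Part (i) shows that $\mu(h_\lambda)$ is the constant $h_\lambda(e,e)$, and $0=\mu(h)=\sum_\lambda h_\lambda(e,e)$ then gives $h=\sum_\lambda(h_\lambda-h_\lambda(e,e))$. For $\lambda=0$ one has $h_0\in({\mathcal S}^+(0)\otimes_k {\mathcal S}^-(0))^G=k$, because ${\mathcal S}^\pm(0)$ consists of right-$B^\pm$-invariant regular functions on $G$ and $G/B^\pm$ is complete; consequently $h_0-h_0(e,e)=0$, so the sum may be restricted to nonzero $\lambda$. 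Uniqueness is inherited from \eqref{G=}: if two such decompositions $\sum_{\lambda\neq 0}(h_\lambda-h_\lambda(e,e))=\sum_{\lambda\neq 0}(h'_\lambda-h'_\lambda(e,e))$ coincide, then $\sum_{\lambda\neq 0}(h_\lambda-h'_\lambda)$ equals a constant, and directness of \eqref{G=} forces all components to vanish, whence $h_\lambda=h'_\lambda$ for every $\lambda\neq 0$.

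There is no genuine obstacle here: the theorem is essentially the formal outcome of Lemma \ref{otimes}, once one observes (as in (i)) that $\mu$ sends a $G$-invariant element of ${\mathcal S}^+\otimes_k {\mathcal S}^-$ to its value at $(e,e)$ regarded as a constant. The only point requiring care is tracking the action conventions, so that the equality $f(a,a)=f(e,e)$ emerges unambiguously from diagonal $G$-invariance.
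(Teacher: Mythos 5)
Your proof is correct and follows essentially the same route as the paper: part (i) comes from the observation that $\mu$ maps $({\mathcal S}^+\otimes_k{\mathcal S}^-)^G$ into the constants $k[G]^G=k$ combined with \eqref{rst}, and part (ii) is deduced from the direct sum decomposition \eqref{G=} of Lemma \ref{otimes}, discarding the $\lambda=0$ component because it is a constant. Your explicit diagonal-specialization argument for (i) and the justification that ${\mathcal S}^\pm(0)$ consists of constants are just spelled-out versions of steps the paper leaves implicit.
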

\begin{proof}
(i) Since $\mu$
is $G$-equivariant, its restriction to $({\mathcal S}^+\otimes_k{\mathcal S}^-)^G$ is a homomorphism to $k[G]^G=k$.\;Hence $\mu(f)$ is
a constant.\;In view of \eqref{rst}, this implies (i).

(ii) If \eqref{decom} holds, then the decomposition \eqref{G=} implies that $h_\lambda$ is the natural projection of
$h$ to $({\mathcal S}^+(\lambda)\otimes_k {\mathcal S}^-(\lambda^*)\!)^G$ determined by this decomposition; whence the uniqueness of \eqref{decom}.\;To prove the existence, let $h_\lambda$ be
the aforementioned projection of $h$ to $({\mathcal S}^+(\lambda)\otimes_k {\mathcal S}^-(\lambda^*)\!)^G$.\;Then
$h=\sum_{\lambda\in F} h_\lambda$ for a finite set $F\subset\XT+$.\;Hence
$0=\mu(h)=\sum_{\lambda\in F}\mu(h_\lambda)$.\;As above, $\mu(h_\lambda)=h_{\lambda}(e,e)$; this implies the equality \eqref{decom}, where the sum is taken over all $\lambda\in F$.\;Since $h_0$ is a constant, we may assume that $F$ does not contain $0$.\;This proves\;(ii).
\end{proof}

In
the next lemma, for brevity,  we put (cf.\;\cite{Ja87})
\begin{equation}\label{notatio}
\begin{gathered}
V(\lambda):={\mathcal S}^-(\lambda)^*\cong {\mathcal S}^+(\lambda)^*,\quad  L(\lambda):=V(\lambda)/{\rm rad}_GV(\lambda),\\[-.5mm]
\pi_\lambda\colon V(\lambda)\to L(\lambda)\;\;\mbox{is the canonical projection.}
\end{gathered}
\end{equation}
The $G$-module $V(\lambda)$ (hence $L(\lambda)$ as well) is generated by a $B^+$-stable line of weight $\lambda$ (see \cite[II, Sect.\;2.13, Lemma]{Ja87}); whence $V(\lambda)$ is also ge\-ne\-rated by a $B^-$-stable line of weight $-\lambda^*$.

Also, for the $G$-modules $P$ and $Q$, we denote by
${\mathscr B}(P\times Q)$
the $G$-module of all bilinear maps $P\times Q\to k$;\;we then have
the isomorphism of $G$-modules
\begin{equation}\label{bd}
P^*\otimes_k Q^*\xrightarrow{\cong} {\mathscr B}(P\!\times\!Q),\quad f\otimes h\mapsto fh.
\end{equation}

\begin{lemma}\label{bili}For all elements $\lambda, \nu\in \XT+$, the following hold{\rm:}
\begin{enumerate}[\hskip 4.2mm\rm(a)]
  \item $\dim {\mathscr B}(V(\lambda)\!\times\!V(\nu)\!)^G\hskip -1mm =\Big\{\hskip -1.5mm\begin{array}{ll}
1& \mbox{if $\;\nu=\lambda^*\!,$}\\
0& \mbox{if $\;\nu\neq\lambda^*\!.$}
\end{array}$
  \item $\dim {\mathscr B}(L(\lambda)\!\times\!L(\nu)\!)^G\hskip -1mm =\Big\{\hskip -1.5mm\begin{array}{ll}
1& \mbox{if $\;\nu=\lambda^*\!,$}\\
0& \mbox{if $\;\nu\neq\lambda^*\!.$}
\end{array}$
  \item  Every nonzero element $\theta\in {\mathscr B}(L(\lambda)\times
  L(\lambda^*)\!)^G$ is a nondegenerate pair\-ing $
      L(\lambda)\!\times\!L(\lambda^*)\to k${\rm.}
  \item If
  $\,l^+\in L(\lambda)$, $l^-\in L(\lambda^*)$ are the nonzero semi-invariants of, res\-pec\-ti\-vely, $B^+$ and $B^-$, then $\theta(l^+, l^-)\neq 0$ for $\theta$ from {\rm (c)}.\;For every nonzero ele\-ment $\epsilon\in k$, there exists a unique $\theta$ such that $\theta(l^+, l^-)=\epsilon$.
  \item Every element $\vartheta\in {\mathscr B}(V(\lambda)\!\times\!V(\lambda^*)\!)^G$\;vanishes on ${\rm ker}\,\pi^{\ }_{\lambda}\times {\rm ker}\,\pi^{\ }_{{\lambda}^*}$.\;If $\vartheta\neq 0$, then $\vartheta$ is a nondegenerate pairing $V(\lambda)\times V(\lambda^*)\to k$.
  \item Let $v^+\in V(\lambda)$
  and $v^-\in V(\lambda^*)$ be, respectively,
  the non\-zero $B^+$\!\!\!-
and $B^-$\!\!\!-semi-invari\-ants of weights $\lambda$ and $-\lambda$
  that generate the $G$-modules $V(\lambda)$ and $V(\lambda^*)$.\;Then
  $\vartheta(v^+, v^-)\neq 0$ for every nonzero
  element
  $\vartheta\in {\mathscr B}
  (V(\lambda)\times V(\lambda^*)\!)^G$.
\end{enumerate}
\end{lemma}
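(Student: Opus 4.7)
The plan is to derive all six parts from three ingredients: $(1)$ the Hom--tensor identification ${\mathscr B}(P\times Q)^G\cong{\rm Hom}_G(P,Q^*)$ coming from \eqref{bd}; $(2)$ the universal property \eqref{we} of Weyl modules together with the identification $V(\mu)^*\cong H^0(\mu^*)$ forced by \eqref{notatio} and \eqref{H}; and $(3)$ the standard fact that the space of $G$-homomorphisms $V(\mu)\to H^0(\mu)$ is one-dimensional and the image of any nonzero such map is the socle $L(\mu)\subset H^0(\mu)$.

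For (a), combining $(1)$ and $(2)$ rewrites ${\mathscr B}(V(\lambda)\times V(\nu))^G$ as ${\rm Hom}_G(V(\lambda),V(\nu)^*)\cong\bigl((V(\nu)^*)^{U^+}\bigr)_\lambda=(H^0(\nu^*)^{U^+})_\lambda$, which by \cite[II.2.2, Prop.]{Ja87} is the $\lambda$-weight slice of a $T$-line of weight $\nu^*$, giving the claim. For (b), the same reduction yields ${\rm Hom}_G(L(\lambda),L(\nu)^*)={\rm Hom}_G(L(\lambda),L(\nu^*))$, which is handled by Schur's lemma combined with the standard $L(\nu)^*\cong L(\nu^*)$. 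For (c), the left and right kernels of a nonzero $G$-invariant form on $L(\lambda)\times L(\lambda^*)$ are proper $G$-submodules of simple modules, hence zero.

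For (d), the form $\theta$ induces a $G$-isomorphism $L(\lambda)\xrightarrow{\cong}L(\lambda^*)^*$; it sends the $B^+$-stable line $kl^+$ to a highest-weight line in $L(\lambda^*)^*\cong L(\lambda)$, and any such functional pairs nontrivially with the unique $(-\lambda)$-weight line $kl^-$ of $L(\lambda^*)$, so $\theta(l^+,l^-)\neq 0$; rescaling then produces the unique $\theta$ with $\theta(l^+,l^-)=\epsilon$. For (e), any $\vartheta$ corresponds to an element of ${\rm Hom}_G(V(\lambda),V(\lambda^*)^*)={\rm Hom}_G(V(\lambda),H^0(\lambda))$, which by $(3)$ is a scalar multiple of the canonical map and so factors as $V(\lambda)\twoheadrightarrow L(\lambda)\hookrightarrow H^0(\lambda)$; hence $\vartheta$ factors through $\pi^{\ }_\lambda\times\pi^{\ }_{\lambda^*}$, vanishes on $\ker\pi^{\ }_\lambda\times\ker\pi^{\ }_{\lambda^*}$, and the induced form on $L(\lambda)\times L(\lambda^*)$ is a nonzero scalar multiple of the form $\theta$ in (c), hence nondegenerate. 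Finally for (f), since $v^+$ and $v^-$ generate $V(\lambda)$ and $V(\lambda^*)$, they cannot lie in the unique maximal proper submodules $\ker\pi^{\ }_\lambda$, $\ker\pi^{\ }_{\lambda^*}$; therefore $\pi^{\ }_\lambda(v^+)$ and $\pi^{\ }_{\lambda^*}(v^-)$ are nonzero scalar multiples of $l^+$ and $l^-$, and $\vartheta(v^+,v^-)$ becomes a nonzero scalar times $\theta(l^+,l^-)$, which is nonzero by (d).

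The main obstacle is not computational; everything is bookkeeping once the three ingredients above are set up. The only step that truly invokes representation theory beyond Schur's lemma and weight-space arithmetic is item $(3)$, the identification of $L(\mu)$ with the image of the canonical map $V(\mu)\to H^0(\mu)$; without this, the factorization that drives (e) and (f) would not be available and one would have no canonical pairing between $V(\lambda)$ and $V(\lambda^*)$ in positive characteristic.
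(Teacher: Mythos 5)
Your argument is correct and is essentially the paper's: both translate $G$-invariant bilinear forms into $G$-homomorphisms and use the Weyl-module universal property \eqref{we} (the paper via its Lemma on $({\mathcal S}^+(\lambda)\otimes_k{\mathcal S}^-(\nu))^G$) for (a)--(b), simplicity of $L(\lambda)$ for (c), a highest/lowest weight-line argument for (d), factorization of every invariant form through $\pi_\lambda\times\pi_{\lambda^*}$ for (e), and the nonvanishing of $\pi_\lambda(v^+)$, $\pi_{\lambda^*}(v^-)$ for (f), your only deviations being cosmetic --- a weight-space argument in (d) instead of the paper's dual-basis computation with the $U^-$-action, and the standard description of ${\rm Hom}_G(V(\lambda),H^0(\lambda))$ in place of the paper's dimension count showing that $\theta\mapsto\theta\circ(\pi_\lambda\times\pi_{\lambda^*})$ is an isomorphism. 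Note that, exactly as in the paper's own proof, what you actually establish in (e) is nondegeneracy of the induced form on $L(\lambda)\times L(\lambda^*)$ --- which is all that (f) and the later applications require --- since $\vartheta$ itself kills ${\rm ker}\,\pi_\lambda\times V(\lambda^*)$ and hence is degenerate on $V(\lambda)\times V(\lambda^*)$ whenever ${\rm ker}\,\pi_\lambda\neq 0$.
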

\begin{proof} 
Part (a) follows from \eqref{10}, \eqref{bd}, \eqref{notatio}.\;Part (b) is proved similarly, using that $L(\lambda)$ is a simple $G$-module with highest weight $\lambda$ (see \cite[II.2.4]{Ja87}).\;The simplicity of $L(\lambda)$  implies (c) because the left and right kernels of $\theta$ are $G$-stable.

Proving (d), take a basis $\{p_1,\ldots, p_s\}$ of $L(\lambda)$ such that
$p_1=l^+$
and eve\-ry  $p_i$ is a weight vector of $T$.\;Let $\{p_1^*,\ldots, p_s^*\}$ be the basis of $L(\lambda^*)$ dual to $\{p_1,\ldots, p_s\}$ with respect to $\theta$.\;Let
$L(\lambda)'$ be
the linear span over $k$ of all $p_i$'s with $i>1$.\;Then $L(\lambda)'$ is $B^-$-stable, and, for every element $u\in U^-$, we have $u\cdot p_1=
p_1+p'$, where $p'\in L(\lambda)'$ (see, e.g.,\;\cite[Sect.\;3.3, Prop.\;2 and p.\,84]{St74}).\;Then, for every elements $\alpha_1,\ldots,\alpha_s\in k$, we have
\begin{align*}
(u\cdot p^*_1)\bigg(\displaystyle \sum_{i=1}^s\alpha_ip_i\bigg)&=
p^*_1\bigg(\displaystyle \sum_{i=1}^s\alpha_i(u^{-1}\cdot p_i)\!\bigg)\\
&=p^*_1(\alpha_1p_1+\mbox{an element of $L(\lambda)'$})\\
&=\alpha_1=p^*_1\bigg(\displaystyle \sum_{i=1}^s\alpha_ip_i\!\bigg);
\end{align*}
whence $u\cdot p^*_1\!=\!p^*_1$.\;Therefore,  $l^-\!=\!\lambda p_1^*$ for a nonzero $\lambda\in k$, hence
$\theta(l^+, l^-)=\lambda\neq 0$.\;This and (b) prove (d).

It follows from \eqref{notatio}, (a), (b) that the embedding
\begin{equation*}
    {\mathscr B}(L(\lambda)\!\times\!L(\lambda^*))^G\to
    {\mathscr B}(V(\lambda)\!\times\!V(\lambda^*))^G,\quad \theta\mapsto \theta\circ (\pi_{\lambda}\times \pi_{\lambda^*})
\end{equation*}
is an isomorphism.\;Part (e) follows from this and (c).

Part (f) follows from (d) and (e), because $\pi_\lambda(v^+)$ and $\pi_{\lambda^*}(v^-)$ are, in view of \eqref{notatio}, the
nonzero semi-invariants of, respectively, $B^+$ and $B^-$.
\end{proof}

\begin{lemma}\label{covv}
Let an algebraic group $H$ act on a algebraic variety $Z$ and let $V$ be a finite-dimensional
submodule of the $H$-module $k[Z]$.\;Then the morphism
\begin{equation}\label{cova}
    \varphi\colon Z\to V^*,\quad \mbox{$\varphi(a)(f)=f(a)$ for every $a\in Z, f\in V$}
\end{equation}
has the following properties:
\begin{enumerate}[\hskip 2.2mm\rm(i)]
  \item $\varphi$ is $H$-equivariant;
  \item the restriction of $\;\varphi^*$ to $(V^*)^*$ is
  an isomorphism $(V^*)^*\to V$;
  \item $\varphi^*$ exercises an isomorphism between $k[\overline{\varphi(Z)}]$ and the subalgebra of $k[Z]$ generated by $V$.
\end{enumerate}
\end{lemma}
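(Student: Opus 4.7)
The plan is to verify (i)--(iii) directly from the definitions; none of them seems to involve any real obstacle, so the proof is primarily bookkeeping about coordinate algebras and dual representations.

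For (i), I would unwind the definitions of the two actions. With the standard convention $(h\cdot f)(a)=f(h^{-1}\cdot a)$ on $k[Z]$ and its contragredient $(h\cdot \xi)(f)=\xi(h^{-1}\cdot f)$ on $V^*$, a direct computation gives
\begin{equation*}
(h\cdot \varphi(a))(f)=\varphi(a)(h^{-1}\cdot f)=(h^{-1}\cdot f)(a)=f(h\cdot a)=\varphi(h\cdot a)(f)
\end{equation*}
for every $h\in H$, $a\in Z$, $f\in V$, proving equivariance.

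For (ii), I would use the canonical identification of $(V^*)^*$ with $V$ available because $V$ is finite dimensional: each $f\in V$ corresponds to the evaluation functional $e_f\colon V^*\to k$, $\xi\mapsto \xi(f)$, regarded as an element of $k[V^*]$ of degree one. Then $\varphi^*(e_f)(a)=e_f(\varphi(a))=\varphi(a)(f)=f(a)$, so $\varphi^*(e_f)=f\in V\subseteq k[Z]$. Thus under the identification $(V^*)^*\cong V$, the restriction of $\varphi^*$ to linear functions is the identity on $V$, hence an isomorphism onto $V\subseteq k[Z]$.

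For (iii), I would factor $\varphi$ as the composition
\begin{equation*}
Z\xrightarrow{\pi} \overline{\varphi(Z)}\xhookrightarrow{\iota} V^*,
\end{equation*}
where $\pi$ is the (by construction dominant) corestriction and $\iota$ is a closed embedding. By Lemma \ref{ce}, $\iota^*\colon k[V^*]\twoheadrightarrow k[\overline{\varphi(Z)}]$ is surjective; and $\pi^*\colon k[\overline{\varphi(Z)}]\to k[Z]$ is injective because any regular function vanishing on the dense subset $\varphi(Z)\subseteq\overline{\varphi(Z)}$ is zero. Since $k[V^*]$ is the symmetric algebra on $(V^*)^*$, its image under $\varphi^*=\pi^*\circ\iota^*$ is the $k$-subalgebra of $k[Z]$ generated by $\varphi^*((V^*)^*)$, which is precisely $V$ by part (ii). Combining this with the injectivity of $\pi^*$ yields that $\pi^*$ is an isomorphism between $k[\overline{\varphi(Z)}]$ and the subalgebra of $k[Z]$ generated by $V$, which is the content of (iii). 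The only point requiring any care is the observation that a dominant morphism to a variety carrying regular functions induces an injection on coordinate algebras; otherwise the argument is entirely formal.
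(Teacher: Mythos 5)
Your proof is correct and follows essentially the same route as the paper: direct verification for (i), the canonical evaluation isomorphism $V\cong(V^*)^*$ pulled back by $\varphi^*$ for (ii), and for (iii) the factorization through $\overline{\varphi(Z)}$, using that linear functions generate $k[\overline{\varphi(Z)}]$ and that dominance makes the comorphism injective. The only difference is that you make explicit the injectivity of $\pi^*$ and the surjectivity of restriction from $k[V^*]$, which the paper states more tersely.
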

\begin{proof} Part (i) is proved by direct verification.

Every function $f\in V$
determines an element $l_f\in (V^*)^*$ by the formula $l_f(s)=s(f)$, $s\in V^*$.\;It is immediate that $V\to (V^*)^*$, $f\mapsto l_f$ is a vector space isomorphism and that \eqref{cova} implies $\varphi^*(\l_f)=f$.\;This proves (ii).

Let $\iota\colon (V^*)^*\to  k[\overline{\varphi(Z)}]$ be the restriction homomorphism.\;The $k$-algebra $k[\overline{\varphi(Z)}]$ is generated by $\iota((V^*)^*)$.\;Part (iii) now follows from the fact that
$\varphi^*$ exercises an embedding of $k[\overline{\varphi(Z)}]$ in $k[Z]$ and, in view of (ii), the image of $\iota((V^*)^*)$
under this embedding is $V$.
\end{proof}

\begin{corollary}\label{coro1} In the notation of Lemma {\rm \ref{covv}},
let $V\neq \{0\}$ and let the orbit $H\cdot a$ be dense in $Z$.\;Then  $\varphi(a)\neq 0$.
\end{corollary}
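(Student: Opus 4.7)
I plan to argue by contradiction, supposing $\varphi(a)=0$. The first step is to observe that the $H$-action on the dual space $V^*$ is linear, being the contragredient of the $H$-action on the finite-dimensional submodule $V$; in particular, the origin of $V^*$ is $H$-fixed. Combining this with the $H$-equivariance of $\varphi$ supplied by Lemma~\ref{covv}(i), I would get $\varphi(h\cdot a)=h\cdot\varphi(a)=h\cdot 0=0$ for every $h\in H$, i.e., $\varphi$ vanishes on the whole orbit $H\cdot a$.

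The second step is purely topological: the preimage $\varphi^{-1}(\{0\})$ is closed in $Z$ since $\varphi$ is a morphism, and it contains the dense subset $H\cdot a$, so it must coincide with $Z$. Unwinding the definition \eqref{cova} of $\varphi$, the equality $\varphi(z)=0$ for every $z\in Z$ is tantamount to saying that $f(z)=0$ for all $f\in V$ and $z\in Z$; hence $V=\{0\}$, contradicting the hypothesis $V\neq\{0\}$. I do not anticipate any substantive obstacle: the proof is a direct consequence of equivariance, density, and the tautological nature of $\varphi$. (Alternatively, one could invoke Lemma~\ref{covv}(iii): if $\varphi(Z)=\{0\}$, then $k[\overline{\varphi(Z)}]=k$, so the subalgebra of $k[Z]$ generated by $V$ is $k$, forcing $V\subseteq k$, and then $\varphi(a)=0$ yields $V=\{0\}$; but the topological route above is more transparent.)
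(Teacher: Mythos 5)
Your proof is correct and matches the argument the paper intends (the corollary is stated without proof precisely because it follows from Lemma~\ref{covv}(i) by this equivariance-plus-density argument): if $\varphi(a)=0$, then since $0\in V^*$ is fixed by the linear $H$-action, $\varphi$ vanishes on the dense orbit $H\cdot a$, hence on all of $Z$, forcing $V=\{0\}$. No gaps; the parenthetical alternative via Lemma~\ref{covv}(iii) is unnecessary but also fine.
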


We call the morphism  \eqref{cova} the {\it covariant determined by the submodule $V$}.

\begin{lemma}\label{phipm} Let $\lambda$ be an element of $\XT+$ and let
\begin{equation*}
\varphi^+\colon G\to {\mathcal S}^+(\lambda)^*, \quad
\varphi^-\colon G\to {\mathcal S}^-(\lambda^*)^*
\end{equation*}
be the covariants determined by the submodules ${\mathcal S}^+(\lambda)$ and ${\mathcal S}^-(\lambda^*)$ of
the $G$-module
$k[G]$.\;Then $v^+:=\varphi^+(e)$ and $v^-:=\varphi^-(e)$ are, respectively, the nonzero $B^+$\!\!- and $B^-$\!\!-se\-mi\-invariants of weights $\lambda$ and $-\lambda$.
\end{lemma}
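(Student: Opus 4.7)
The plan is to verify separately the two assertions about each of $v^+$ and $v^-$: that it is nonzero, and that it is a $B^\pm$-semiinvariant of the stated weight.

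For the weight computation, I would use that the covariant $\varphi^+$ is $G$-equivariant by Lemma \ref{covv}(i), so that for every $b \in B^+$,
\[
(b \cdot v^+)(f) \;=\; \varphi^+(b \cdot e)(f) \;=\; \varphi^+(b)(f) \;=\; f(b), \qquad f \in \mathcal{S}^+(\lambda).
\]
Writing $b = tu$ with $t \in T$ and $u \in U^+$, the two defining conditions of $\mathcal{S}^+(\lambda)$ from \eqref{S+} and \eqref{S+la} yield $f(b) = f(tu) = f(t) = t^\lambda f(e) = b^\lambda v^+(f)$, where $b^\lambda$ denotes the character of $B^+$ corresponding to $\lambda \in \mathrm{X}(T)$ under the identification $\mathrm{X}(B^+) \cong \mathrm{X}(T)$. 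Hence $b \cdot v^+ = b^\lambda v^+$, so $v^+$ has weight $\lambda$ as a $B^+$-semiinvariant. The computation for $v^-$ is completely parallel: the defining conditions of $\mathcal{S}^-(\lambda^*)$, combined with $w_0(\lambda^*) = -\lambda$ (which follows from $\lambda^* = -w_0(\lambda)$ and $w_0^2 = e$), give $f(b) = t^{-\lambda} f(e)$ for $b = tu \in B^-$ with $t \in T$, $u \in U^-$, so that $b \cdot v^- = b^{-\lambda} v^-$, as required.

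For nonvanishing, $v^+ \neq 0$ is equivalent to the existence of some $f \in \mathcal{S}^+(\lambda)$ with $f(e) \neq 0$, and this is precisely what was established in step 6 of the proof of Theorem \ref{sur}: since $\mathcal{S}^+(\lambda) \neq 0$ by \eqref{Sm}(i), Borel's fixed point theorem produces a $B^-$-stable line on which $B^-$ acts through some character $\nu$, and for any nonzero $f$ on this line one has $f(b^{-1}u) = b^\nu f(e)$ for $b \in B^-$, $u \in U^+$; the density of $B^-U^+$ in $G$ then forces $f(e) \neq 0$. The nonvanishing of $v^-$ follows by the mirror argument applied to $\mathcal{S}^-(\lambda^*)$, with the roles of $B^+$ and $B^-$ interchanged.

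I do not anticipate a serious obstacle here: the lemma is essentially a direct unwinding of the definition \eqref{cova} of the covariant and of the spaces $\mathcal{S}^\pm(\lambda)$ in \eqref{S+la}, together with the density argument already produced in the proof of Theorem \ref{sur}. The only point to handle with care is the action convention—because $G$ acts on $k[G]$ by left translations, the $G$-equivariance of $\varphi^\pm$ produces the identity $(b \cdot v^\pm)(f) = f(b)$ rather than $f(b^{-1})$, and the signs then match up in the $\mathcal{S}^-$ case exactly through the identity $w_0(\lambda^*) = -\lambda$.
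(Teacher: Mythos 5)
Your proposal is correct and follows essentially the same route as the paper: the weight computation via $G$-equivariance of the covariant, $(b\cdot v^+)(f)=f(b)=b^{\lambda}f(e)$ using \eqref{cova}, \eqref{S+}, \eqref{S+la} (and $w_0(\lambda^*)=-\lambda$ for the $v^-$ case), is exactly the argument given there. The only cosmetic difference is the nonvanishing step, which the paper dispatches in one line by Corollary \ref{coro1} (the orbit $G\cdot e=G$ is dense and ${\mathcal S}^+(\lambda)\neq 0$ by \eqref{Sm}(i)), whereas you rerun the explicit big-cell density argument from step 6 of the proof of Theorem \ref{sur}; both are valid and rest on the same density idea.
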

\begin{proof}
First, we have
$v^+\neq 0$, $v^-\neq 0$ by Corollary \ref{coro1}.\;Next, for every  $f\in {\mathcal S}^+(\lambda)$, $b\in B^+$, we have
\begin{align*}(b\cdot v^+)(f)&=\varphi^+(e)(b^{-1}\cdot f)\overset{\eqref{cova}}{=\hskip -1mm=}
(b^{-1}\cdot f)(e)\\[-1.5mm]
&=f(b)\overset{\eqref{S+la}}{=}b^{\lambda}f(e)\overset{\eqref{cova}}{=\hskip -1mm=}(b^{\lambda}v^+)(f);\end{align*}
whence $b\cdot v^+=b^{\lambda}v^+$, i.e., $v^+$ is a nonzero $B^+$-semi-invariant of weight $\lambda$, as claimed.\;For $v^-$ the proof is similar.
\end{proof}
\begin{theorem}\label{iiss}
The restriction of $\mu$ to $({\mathcal S}^+(\lambda)\otimes_k {\mathcal S}^-(\lambda)\!)^G$
for every $\lambda\in \XT+$
is an isomorphism $({\mathcal S}^+(\lambda)\otimes_k {\mathcal S}^-(\lambda^*)\!)^G\xrightarrow{\cong} k[G]^G=k$.
\end{theorem}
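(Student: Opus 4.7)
The plan is to show that both the source and the target are one-dimensional and that the restriction is nonzero. The source has dimension one by \eqref{10} in Lemma \ref{otimes}. The target equals $k$ because $G$ acts transitively on itself by left translations, so $k[G]^G=k$. Moreover, since $\mu$ is $G$-equivariant with respect to left translations, $\mu$ sends the $G$-invariants of ${\mathcal S}^+\otimes_k {\mathcal S}^-$ into $k[G]^G=k$. So the proof reduces to producing a single $f\in ({\mathcal S}^+(\lambda)\otimes_k {\mathcal S}^-(\lambda^*))^G$ with $\mu(f)\neq 0$, and the cleanest evaluation point is $(e,e)$: by \eqref{rst}, $\mu(f)(e)=f(e,e)$, so it suffices to exhibit a nonzero $G$-invariant in ${\mathcal S}^+(\lambda)\otimes_k {\mathcal S}^-(\lambda^*)$ that does not vanish on the diagonal at $e$.

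For this, I translate the problem into the language of the Weyl modules $V(\lambda)={\mathcal S}^+(\lambda)^*$ and $V(\lambda^*)={\mathcal S}^-(\lambda^*)^*$ from \eqref{notatio}. Via the isomorphism \eqref{bd}, any nonzero element $f\in({\mathcal S}^+(\lambda)\otimes_k {\mathcal S}^-(\lambda^*))^G$ corresponds to a nonzero invariant bilinear form $\vartheta\in{\mathscr B}(V(\lambda)\times V(\lambda^*))^G$. If $f=\sum_i f_i^+\otimes f_i^-$, then under the canonical isomorphism $V\cong (V^*)^*$, the function value $f_i^\pm(a)$ equals $\varphi^\pm(a)(f_i^\pm)$, where $\varphi^+\colon G\to V(\lambda)$ and $\varphi^-\colon G\to V(\lambda^*)$ are the covariants of Lemma \ref{phipm}. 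Consequently, for every $a,b\in G$,
\begin{equation*}
f(a,b)=\sum_i f_i^+(a)\,f_i^-(b)=\vartheta(\varphi^+(a),\varphi^-(b)).
\end{equation*}

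It remains to evaluate at $a=b=e$. By Lemma \ref{phipm}, $v^+:=\varphi^+(e)$ and $v^-:=\varphi^-(e)$ are nonzero $B^+$- and $B^-$-semi-invariants of weights $\lambda$ and $-\lambda$ respectively, and they generate the $G$-modules $V(\lambda)$ and $V(\lambda^*)$. By Lemma \ref{bili}(f), this forces $\vartheta(v^+,v^-)\neq 0$ for every nonzero $\vartheta\in{\mathscr B}(V(\lambda)\times V(\lambda^*))^G$. Hence $\mu(f)(e)=f(e,e)=\vartheta(v^+,v^-)\neq 0$, so $\mu$ restricts to a nonzero linear map between one-dimensional spaces, which is therefore an isomorphism.

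The only potential subtlety is the precise bookkeeping of the identifications $V(\lambda)\cong{\mathcal S}^+(\lambda)^*$ in Lemma \ref{phipm} and the compatibility of the bilinear pairing in \eqref{bd} with the multiplication $\mu$; once these are set up correctly, the nonvanishing $\vartheta(v^+,v^-)\neq 0$ delivered by Lemma \ref{bili}(f) is precisely the input needed, and the argument is essentially a direct computation.
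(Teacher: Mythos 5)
Your proposal is correct and follows essentially the same route as the paper: reduce via \eqref{rst} and Lemma \ref{otimes} to exhibiting an invariant $f$ with $f(e,e)\neq 0$, then identify $f$ with an invariant bilinear form on $V(\lambda)\times V(\lambda^*)$ and conclude nonvanishing from Lemmas \ref{phipm} and \ref{bili}(f). The only cosmetic difference is direction: the paper starts from a nonzero $\vartheta$ and builds $f=\vartheta\circ(\varphi^+\times\varphi^-)$ using Lemma \ref{covv}, whereas you start from $f$ and pass to $\vartheta$ via \eqref{bd}; the substance is identical.
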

\begin{proof}
In view of
\eqref{rst}
and Lemma \ref{otimes}, the proof is reduced to showing that there is a function $f\in
({\mathcal S}^+(\lambda)\otimes_k {\mathcal S}^-(\lambda^*)\!)^G$ such that $f(e,e)\neq 0$.

Consider the covariants
$\varphi^+$ and $\varphi^-$ from Lemma \ref{phipm} and the $G$-equivariant morphism
\begin{equation*}
 \varphi:=   \varphi^+\times \varphi^-\colon G\times G\to {\mathcal S}^+(\lambda)^*\times {\mathcal S}^-(\lambda^*)^*.
\end{equation*}
Lemma \ref{bili}(a) and \eqref{notatio} imply that ${\mathscr B}({\mathcal S}^+(\lambda)^*\times {\mathcal S}^-(\lambda^*)^*)^G$ contains a nonzero element $\vartheta$.\;By Lemma \ref{covv}, the function
$f:=\vartheta\circ\varphi\colon G\times G\to k$ is contained  in $({\mathcal S}^+(\lambda)\otimes_k {\mathcal S}^-(\lambda^*)\!)^G$.\;For this $f$, using Lemmas \ref{phipm} and \ref{bili}(f), we obtain
\begin{equation}\label{varth}
f(e,e)=\vartheta(\varphi(e,e))=\vartheta(\varphi^+(e), \varphi^-(e))\neq 0.
\end{equation}
This completes the proof.
\end{proof}

\begin{corollary}\label{corrr}
For every element $\lambda\in\XT+$, there exists a unique element
\begin{equation}\label{s}
s_\lambda\in ({\mathcal S}^+(\lambda)\otimes_k {\mathcal S}^-(\lambda^*)\!)^G\subseteq k[G\times G] \;\;\mbox{such that
$s_{\lambda}(e,e)=1$.}
\end{equation}
If $\{f_1,\ldots, f_d\}$ and $\{h_1,\ldots , h_d\}$ are the bases of ${\mathcal S}^+(\lambda)$ and ${\mathcal S}^-(\lambda^*)$ dual with respect to a nondegenerate $G$-invariant pairing
${\mathcal S}^+(\lambda)\times {\mathcal S}^-(\lambda^*)\to k$ {\rm(}the latter exists by {\rm \eqref{bd}} and Lemma {\rm\ref{bili})}, then $\varepsilon:=\sum_{i=1}^d f_i(e)h_i(e)\neq 0$ and
\begin{equation*}
s_\lambda=\varepsilon^{-1}\bigg(\displaystyle \sum_{i=1}^d f_i\otimes h_i\bigg).
\end{equation*}
\end{corollary}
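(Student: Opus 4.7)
The statement splits naturally into two parts: the existence and uniqueness of $s_\lambda$, and the explicit formula in terms of dual bases. The plan is that both parts follow quickly from the machinery already built up, with Theorem \ref{iiss} providing essentially all of the content and a standard ``Casimir-style'' identification of the $G$-invariant $\sum_i f_i\otimes h_i$ doing the rest.

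For existence and uniqueness of $s_\lambda$, I would argue as follows. By Lemma \ref{otimes} (or Theorem \ref{iiss}) the space $({\mathcal S}^+(\lambda)\otimes_k {\mathcal S}^-(\lambda^*))^G$ is one-dimensional. For any $G$-invariant $f$ in ${\mathcal S}^+\otimes_k{\mathcal S}^-$, identity \eqref{rst} gives $\mu(f)(a)=f(a,a)$; but $\mu(f)\in k[G]^G=k$ is a constant, so $f(a,a)=f(e,e)$ for every $a$. By Theorem \ref{iiss} the restriction of $\mu$ to $({\mathcal S}^+(\lambda)\otimes_k {\mathcal S}^-(\lambda^*))^G$ is an isomorphism onto $k$, hence $f\mapsto f(e,e)$ is a linear isomorphism of this one-dimensional space onto $k$. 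Thus there is exactly one element $s_\lambda$ in this space with $s_\lambda(e,e)=1$.

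For the explicit formula, set $\sigma:=\sum_{i=1}^d f_i\otimes h_i\in {\mathcal S}^+(\lambda)\otimes_k {\mathcal S}^-(\lambda^*)$. The key claim is that $\sigma$ is $G$-invariant. This is a standard dual-basis computation: if $g\in G$ acts on the basis $\{f_i\}$ by the matrix $A=(a_{ji})$, then $G$-invariance of the pairing forces $g$ to act on $\{h_i\}$ by $(A^{-1})^{\mathrm T}$, whence the direct computation $g\cdot \sigma=\sigma$. (Equivalently, under the isomorphism ${\mathcal S}^+(\lambda)\otimes_k {\mathcal S}^-(\lambda^*)\cong \mathrm{End}({\mathcal S}^+(\lambda))$ supplied by the nondegenerate $G$-invariant pairing, the element $\sigma$ corresponds to the identity endomorphism, which is manifestly $G$-invariant.) Since $\{f_i\otimes h_j\}$ is a basis of the tensor product, $\sigma\neq 0$, so $\sigma$ spans the one-dimensional space $({\mathcal S}^+(\lambda)\otimes_k{\mathcal S}^-(\lambda^*))^G$.

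Finally, evaluating at $(e,e)$ via \eqref{ide} gives $\sigma(e,e)=\sum_{i=1}^d f_i(e)h_i(e)=\varepsilon$, and by the first part this equals $\mu(\sigma)$. Since $\sigma\neq 0$ and the restriction of $\mu$ to $({\mathcal S}^+(\lambda)\otimes_k {\mathcal S}^-(\lambda^*))^G$ is injective (Theorem \ref{iiss}), we conclude $\varepsilon\neq 0$. The normalization $s_\lambda=\varepsilon^{-1}\sigma$ then gives $s_\lambda(e,e)=1$, which by uniqueness pins down $s_\lambda$ as in the statement. There is no genuine obstacle here; the only point demanding care is the verification of $G$-invariance of $\sigma$, and even this is routine once the $G$-invariance of the pairing is used.
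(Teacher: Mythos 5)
Your proposal is correct and follows essentially the same route as the paper: the paper also identifies $\sum_i f_i\otimes h_i$ with the identity endomorphism under the pairing-induced isomorphism ${\mathcal S}^+(\lambda)\otimes_k{\mathcal S}^-(\lambda^*)\cong {\rm Hom}({\mathcal S}^+(\lambda),{\mathcal S}^+(\lambda))$ to get a nonzero $G$-invariant, and then concludes via Theorem \ref{iiss} together with \eqref{rst} (and the one-dimensionality from Lemma \ref{otimes}), exactly as you do. No gaps.
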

\begin{proof} First, note that if $P$, $Q$ are the finite dimensional $kG$-modules,  $\theta\in {\mathscr B}(P, Q)^G$ is a nondegenerate pairing $P\times Q\to k$, and
$\{p_1,\ldots, p_m\}$ and $\{q_1,\ldots, q_m\}$ are the bases of $P$ and $Q$ dual with respect to $\theta$, then $\sum_{i=1}^m p_i\otimes q_i$ is a nonzero element of $(P\otimes_k Q)^G$ (not depending on the choice of these bases). Indeed, $\theta$ determines the isomorphism
of $G$-modules
\begin{equation}\label{mmmm}
\begin{split}
    \phi\colon
    P\otimes_k Q&\to {\rm Hom}(P, P),\\
    (\phi(p\otimes q))(p')&=\theta(p', q)p,\;\mbox{where $p, p'\in P,\, q\in Q$}.
    \end{split}
\end{equation}
From \eqref{mmmm} we then obtain
\begin{align*}
\bigg(\phi\bigg(\displaystyle  \sum_{i=1}^mp_i\otimes q_i\bigg)\!\!\bigg)(p_j)&=
\displaystyle \sum_{i=1}^m\theta(p_j, q_i)p_i=\displaystyle \sum_{i=1}^m\delta_{ij} p_i=p_j,
\end{align*}
therefore, $\phi\big(\sum_{i=1}^mp_i\otimes q_i\big)={\rm id}_{P}$; whence the claim.

For $P={\mathcal S}^+(\lambda)$, $Q={\mathcal S}^-(\lambda^*)$, it yields that $\sum_{i=1}^d f_i\otimes h_i$ is a nonzero element of $({\mathcal S}^+(\lambda)\otimes_k {\mathcal S}^-(\lambda^*))^G$.\;Theorem \ref{iiss} and \eqref{rst} then complete the proof.
\end{proof}

\begin{remark}\label{000} For ${\rm char}\,k=0$, there is another characterization of $s_\lambda$.\;Namely, let
${\mathscr U}$ be the universal enveloping algebra of ${\rm Lie}\,G$.\;Every
${\mathcal S}^\pm(\lambda)$ is endowed with the natural ${\mathscr U}$-module structure.\;Let
$\{x_1,\ldots, x_n\}$ and $\{x_1^*,\ldots, x_n^*\}$ be the bases of ${\rm Lie}\,G$ dual with respect to the Killing form $\Phi$.\;Identify ${\rm Lie}\,T$ with its dual space by means of $\Phi$.\;Let $\sigma$ be the sum of all positive roots.\;For every $\lambda\in \XT+$, put
\begin{equation}\label{cl}
c_\lambda:=\Phi(\lambda + \sigma, \lambda)+\Phi(\lambda^* + \sigma, \lambda^*)
\end{equation}
and consider on the space ${\mathcal S}^+(\lambda)\otimes_k {\mathcal S}^-(\lambda^*)$
the linear operator
\begin{equation}\label{Delta}
\Delta:=\displaystyle \sum_{i=1}^{n}(x_i\otimes x_i^*+x_i^*\otimes x_i).
\end{equation}
\begin{proposition} The following properties of an element $t\in {\mathcal S}^+(\lambda)\otimes_k {\mathcal S}^-(\lambda^*)$ are equivalent:
\begin{enumerate}[\hskip 4.2mm \rm(i)]
\item $t=s_\lambda$;
\item $\Delta (t)=-c_\lambda t$ and $t(e,e)=1$.
\end{enumerate}
\end{proposition}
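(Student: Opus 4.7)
The plan is to identify the operator $\Delta$ defined in \eqref{Delta} with the difference between the Casimir acting on $\mathcal S^+(\lambda)\otimes_k \mathcal S^-(\lambda^*)$ through the diagonal $\mathfrak g$-action and the sum of the Casimirs acting on each tensor factor separately. Since the tensor factors are simple $G$-modules in characteristic zero, this will pin down the action of $\Delta$ on each isotypic component as an explicit scalar, and the $(-c_\lambda)$-eigenspace of $\Delta$ will turn out to be the line $k\cdot s_\lambda$ of Corollary \ref{corrr}.

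First I would let $C := \sum_{i=1}^n x_i x_i^* \in \mathscr U$ be the Casimir element and observe, using that the coproduct on $\mathscr U$ sends each $X\in {\rm Lie}\, G$ to $X\otimes 1 + 1\otimes X$, the identity
\[
\widetilde C := \sum_{i=1}^n (x_i\otimes 1 + 1\otimes x_i)(x_i^*\otimes 1 + 1\otimes x_i^*) = C\otimes 1 + 1\otimes C + \Delta,
\]
so that $\widetilde C$ is the action of $C$ on $\mathcal S^+(\lambda)\otimes_k \mathcal S^-(\lambda^*)$ via the diagonal $\mathfrak g$-structure. Because ${\rm char}\, k = 0$, complete reducibility together with \eqref{Sm}(ii) identifies $\mathcal S^+(\lambda)\cong L(\lambda^*)$ and $\mathcal S^-(\lambda^*)\cong L(\lambda)$; since $C$ acts on each simple $L(\mu)$ as the scalar $\Phi(\mu+\sigma,\mu)$, the summand $C\otimes 1 + 1\otimes C$ acts on the whole tensor product as the scalar $c_\lambda$ of \eqref{cl}.

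Decomposing the semisimple $G$-module $\mathcal S^+(\lambda)\otimes_k \mathcal S^-(\lambda^*) = \bigoplus_\mu M_\mu$ into $L(\mu)$-isotypic components, the central element $\widetilde C$ acts on $M_\mu$ as $\Phi(\mu+\sigma,\mu)$, whence $\Delta = \widetilde C - c_\lambda\cdot{\rm id}$ acts there as $\Phi(\mu+\sigma,\mu) - c_\lambda$. Every $\mu$ appearing has trivial central character (the central characters of $L(\lambda^*)$ and $L(\lambda)$ being inverses), reducing the vanishing question to the semisimple situation where $\Phi(\mu+\sigma,\mu) = \Phi(\mu,\mu) + \Phi(\sigma,\mu) > 0$ for every non-zero dominant $\mu$, by positive definiteness of the Killing form on the real root span and non-negativity of $\Phi(\sigma,\mu)$ for dominant $\mu$. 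Thus the $(-c_\lambda)$-eigenspace of $\Delta$ equals $M_0 = (\mathcal S^+(\lambda)\otimes_k \mathcal S^-(\lambda^*))^G$, which by Lemma \ref{otimes} and Corollary \ref{corrr} is the line spanned by $s_\lambda$, normalized so that $s_\lambda(e,e) = 1$. Both implications then follow: for (i)$\Rightarrow$(ii), $G$-invariance of $s_\lambda$ gives $\widetilde C(s_\lambda) = 0$, hence $\Delta(s_\lambda) = -c_\lambda s_\lambda$; conversely $\Delta(t) = -c_\lambda t$ forces $t\in k\cdot s_\lambda$ and $t(e,e) = 1$ pins down the scalar. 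The main delicate point to verify carefully is the strict positivity $\Phi(\mu+\sigma,\mu) > 0$ for non-zero dominant $\mu$ with trivial central character.
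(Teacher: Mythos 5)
Your proposal is correct and follows essentially the same route as the paper's own proof: writing the diagonal Casimir as $C\otimes 1+1\otimes C+\Delta$, using that the Casimir acts on a simple module of highest weight $\mu$ by $\Phi(\mu+\sigma,\mu)>0$ for $\mu\neq 0$ so that its kernel is exactly the invariants, and then invoking Corollary \ref{corrr} to identify that one-dimensional space with $k\,s_\lambda$ and the normalization $t(e,e)=1$ to fix the scalar. The only difference is cosmetic: you decompose into isotypic components and comment on central characters, whereas the paper simply states that the kernel of $\Omega$ on any finite-dimensional module is $V^G$.
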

\begin{proof} By \cite[Chap.\,VIII, \S6, Sect.\,4, Cor.]{Bou75}, the Casimir element $\Omega:=\sum_{i=1}^nx_ix_i^*\in {\mathscr U}$ acts on any simple ${\mathscr U}$-module with the highest weight $\gamma$ as scalar multiplication by
$\Phi(\gamma+\sigma, \gamma)$.\;Since $\Phi(\gamma+\sigma, \gamma)>0$ if $\gamma\neq 0$, the kernel of $\Omega$ in any finite dimensional ${\mathscr U}$-module $V$ coincides with $V^G$.\;We apply this to $V={\mathcal S}^+(\lambda)\otimes_k {\mathcal S}^-(\lambda^*)$.\;For any elements $f\in {\mathcal S}^+(\lambda)$, $h\in {\mathcal S}^-(\lambda^*)$, we deduce from \eqref{cl}, \eqref{Delta} the following:
\begin{equation*}
\begin{split}
\Omega(f\!\otimes\! h)\!&=\!\displaystyle  \sum_{i=1}^n\!\big(x_ix_i^*(f)\!\otimes\! h\!+\!x_i^*(f)\!\otimes\! x_i(h)\!
+\!x_i(f)\!\otimes\! x_i^*(h)\!+\!f\!\otimes\! x_ix_i^*(h)\!\big)\\
&=\Omega(f)\!\otimes\! h\!+\! f\!\otimes\! \Omega(h)\!+\!\Delta(f\!\otimes\! h)=c_\lambda (f\!\otimes\!h)\!+\!\Delta(f\!\otimes\! h).
\end{split}
\end{equation*}
Now Corollary \ref{corrr} and the aforesaid about ${\rm ker}\,\Omega$ complete the proof.
\end{proof}
\end{remark}

\begin{theorem}\label{iG}
Let $\lambda_1,\ldots, \lambda_m$ be a system of generators of the monoid
$\XT+$. Then $({\rm ker}\,\mu)^G$ is the linear span over $k$ of all monomials of the form
\begin{gather*}
    (s_{\lambda_1}-1)^{d_1}\cdots (s_{\lambda_m}-1)^{d_m}\!,\;\mbox{where $d_i\in {\mathbb N}$,  $d_1+\cdots+d_m>0$,}
\end{gather*}
where $s_{\lambda_i}$ is defined in
Corollary {\rm \ref{corrr}}.
\end{theorem}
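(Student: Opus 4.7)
The plan is as follows. By Theorem~\ref{vect}(ii), every $h \in (\ker \mu)^G$ is uniquely a finite sum $h = \sum_{\lambda} (h_\lambda - h_\lambda(e,e))$ with $h_\lambda \in (\mathcal{S}^+(\lambda) \otimes_k \mathcal{S}^-(\lambda^*))^G$ and $\lambda$ nonzero in $\XT+$. Corollary~\ref{corrr} asserts that each space $(\mathcal{S}^+(\lambda) \otimes_k \mathcal{S}^-(\lambda^*))^G$ is one-dimensional, spanned by the canonical element $s_\lambda$ normalized by $s_\lambda(e,e) = 1$; writing $h_\lambda = c_\lambda s_\lambda$ with $c_\lambda \in k$, this reduces the theorem to showing (i) that each $s_\lambda - 1$ with $\lambda \neq 0$ is a $k$-linear combination of the monomials $(s_{\lambda_1}-1)^{d_1}\cdots(s_{\lambda_m}-1)^{d_m}$ with $d_1+\cdots+d_m > 0$, and (ii) that each such monomial lies in $(\ker \mu)^G$.

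The key ingredient is the multiplicative identity
\[
s_\lambda \, s_\mu \;=\; s_{\lambda + \mu} \qquad \text{for all } \lambda, \mu \in \XT+.
\]
To establish it, I observe that the bigrading \eqref{grS+-} places $s_\lambda s_\mu$ inside $\mathcal{S}^+(\lambda + \mu) \otimes_k \mathcal{S}^-(\lambda^* + \mu^*)$; since $(\lambda + \mu)^* = \lambda^* + \mu^*$ by linearity of $-w_0$, and since $s_\lambda s_\mu$ is $G$-invariant with $(s_\lambda s_\mu)(e,e) = 1$, the uniqueness clause of Corollary~\ref{corrr} forces the equality. Now, writing $\lambda = n_1 \lambda_1 + \cdots + n_m \lambda_m$ with $n_i \in \mathbb{N}$ (possible since the $\lambda_i$ generate $\XT+$) and setting $y_i := s_{\lambda_i} - 1$, iteration of the identity gives $s_\lambda = \prod_{i=1}^m (1 + y_i)^{n_i}$. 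The binomial theorem, whose coefficients are integers and hence valid in arbitrary characteristic, then yields
\[
s_\lambda - 1 \;=\; \sum_{\substack{0 \leq d_i \leq n_i \\ (d_1,\ldots,d_m) \neq (0,\ldots,0)}} \binom{n_1}{d_1}\cdots\binom{n_m}{d_m}\, (s_{\lambda_1}-1)^{d_1}\cdots(s_{\lambda_m}-1)^{d_m},
\]
proving (i).

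For (ii), each $y_i$ is $G$-invariant (as $s_{\lambda_i}$ and the constant $1$ are), and $\mu(y_i)$ is the constant $s_{\lambda_i}(e,e) - 1 = 0$ by \eqref{rst}, so $y_i \in (\ker \mu)^G$. Since $\ker \mu$ is an ideal in $\mathcal{S}^+ \otimes_k \mathcal{S}^-$, every monomial of total degree at least $1$ in the $y_i$ stays in $\ker \mu$, while $G$-invariance is preserved under products of invariants; this gives (ii). I do not anticipate any genuine obstacle in the plan: the only non-routine step is noticing the multiplicativity $s_\lambda s_\mu = s_{\lambda+\mu}$, after which the theorem is an essentially formal binomial manipulation on top of the results already proved.
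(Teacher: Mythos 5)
Your proposal is correct and follows essentially the same route as the paper: the paper likewise reduces to a single $f\in({\mathcal S}^+(\lambda)\otimes_k{\mathcal S}^-(\lambda^*))^G$ via Theorem~\ref{vect}, uses the grading \eqref{grS+-} together with the one-dimensionality \eqref{10} and the normalization in Corollary~\ref{corrr} to identify $\prod_i s_{\lambda_i}^{d_i}$ with the normalized invariant (your multiplicativity $s_\lambda s_\mu=s_{\lambda+\mu}$ is exactly this step), and then expands $\prod_i\bigl((s_{\lambda_i}-1)+1\bigr)^{d_i}-1$ binomially. No gaps; the argument matches the paper's proof in substance.
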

\begin{proof}
By Theorem \ref{vect}(i), the linear span $L$ referred to in Theorem \ref{iG} is contained in $({\rm ker}\,\mu)^G$.\;In view of Theorem \ref{vect}, to prove the converse inclusion $({\rm ker}\,\mu)^G\subseteq L$, we have to show that, for every function
\begin{equation}\label{fff}
f\in ({\mathcal S}^+(\lambda)\otimes_k {\mathcal S}^-(\lambda^*)\!)^G,
\end{equation}
we have $f-f(e,e)\in L$.\;Since $\lambda_1,\ldots, \lambda_m$ is a system of generators of
$\XT+$, there are the integers $d_1,\ldots, d_m\in {\mathbb N}$ such that $\lambda=\sum_{i=1}^md_i\lambda_i$.\;From \eqref{s} and \eqref{grS+-} we then infer
that $h:=\prod_{i=1}^ms_{\lambda_i}^{d_i}
\in ({\mathcal S}^+(\lambda)\otimes_k {\mathcal S}^-(\lambda^*)\!)^G$ and $h(e,e)=1$.\;This, \eqref{fff}, and \eqref{10} imply that $f=f(e,e)h$.\;Therefore,
\begin{equation}\label{ef}
f-f(e,e)=f(e,e)(h-1)=f(e,e)\bigg(\!\displaystyle \prod_{i=1}^m((s_{\lambda_i}-1)+1)^{d_i}-1\!\bigg).
\end{equation}
The right-hand side of \eqref{ef} clearly lies in $L$. This completes the proof.
\end{proof}

\begin{theorem}\label{gi}
Let $\lambda_1,\ldots, \lambda_m$ be a system of generators of the monoid
$\XT+$.\;Then the ideal ${\rm ker}\,\mu$ in ${\mathcal S}^+\otimes_k {\mathcal S}^-$
is generated by $s_{\lambda_1}-1,\ldots, s_{\lambda_m}-1$, where $s_{\lambda_i}$ is defined in
Corollary {\rm \ref{corrr}}.
\end{theorem}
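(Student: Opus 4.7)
The plan is to deduce Theorem \ref{gi} directly from Theorems \ref{ker} and \ref{iG}, which together have already done all the real work. Let $J$ denote the ideal of $\mathcal{S}^+\otimes_k \mathcal{S}^-$ generated by $s_{\lambda_1}-1,\ldots,s_{\lambda_m}-1$. I would establish the two inclusions $J\subseteq \ker\mu$ and $\ker\mu\subseteq J$ separately.

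For $J\subseteq \ker\mu$, I would first check that each generator $s_{\lambda_i}-1$ already lies in $\ker\mu$. By Corollary \ref{corrr}, $s_{\lambda_i}\in (\mathcal{S}^+(\lambda_i)\otimes_k \mathcal{S}^-(\lambda_i^*))^G$ and $s_{\lambda_i}(e,e)=1$, so Theorem \ref{vect}(i) applied to $f=s_{\lambda_i}$ gives $s_{\lambda_i}-s_{\lambda_i}(e,e)=s_{\lambda_i}-1\in (\ker\mu)^G\subseteq \ker\mu$. Since $\ker\mu$ is an ideal, this yields $J\subseteq \ker\mu$.

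For the reverse inclusion, I would invoke Theorem \ref{ker}, which says $\ker\mu$ is generated as an ideal of $\mathcal{S}^+\otimes_k\mathcal{S}^-$ by $(\ker\mu)^G$. Thus it suffices to show $(\ker\mu)^G\subseteq J$. By Theorem \ref{iG}, every element of $(\ker\mu)^G$ is a $k$-linear combination of monomials of the form $(s_{\lambda_1}-1)^{d_1}\cdots(s_{\lambda_m}-1)^{d_m}$ with $d_1+\cdots+d_m>0$. Each such monomial has at least one positive exponent $d_j\geqslant 1$, so it is divisible by $s_{\lambda_j}-1$ and hence lies in $J$. Consequently $(\ker\mu)^G\subseteq J$, and taking the ideal generated by both sides gives $\ker\mu\subseteq J$.

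There is no real obstacle here: the theorem is essentially a repackaging of the two preceding results once one observes the trivial divisibility of each spanning monomial by some $s_{\lambda_j}-1$. If anything, the only thing to be careful about is making the logical chain explicit, namely that Theorem \ref{ker} reduces the generation statement from $\ker\mu$ to $(\ker\mu)^G$, while Theorem \ref{iG} is precisely tailored so that its spanning monomials manifestly sit inside $J$. The argument therefore reduces to three lines, and the bulk of the work is inherited from the machinery developed in Sections 3 and 4.
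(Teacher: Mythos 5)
Your proof is correct and follows exactly the paper's route: the paper's own proof of this theorem is the one-line deduction from Theorems \ref{ker} and \ref{iG}, and you have simply made the (trivial) details of that deduction explicit, including the membership $s_{\lambda_i}-1\in(\ker\mu)^G$ via Theorem \ref{vect}(i) and the divisibility of each spanning monomial by some $s_{\lambda_j}-1$.
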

\begin{proof} This follows from Theorems \ref{ker} and \ref{iG}.
\end{proof}

\section{Presentation of ${\mathcal S}^\pm$}

If the group $G$ is semisimple, then the semigroup $\XT+$ has no units other than $0$.\;Hence
the set ${\mathscr H}$ of all indecomposable elements of $\XT+$ is finite,
\begin{equation}\label{Hilb}
{\mathscr H}=\{\lambda_1,\ldots, \lambda_d\},
\end{equation}
generates $\XT+$, and
every generating set of $\XT+$ contains ${\mathscr H}$ (see, e.g., \cite[Lemma 3.4.3]{Lo05}).\;Note that
${\mathscr H}$, called the {\it Hilbert basis of $\;\XT+$}, in general  is not a free generating system of $\XT+$ (i.e., it is not true that every element
$\alpha\in \XT+$ may be uniquely expressed in the form
$\alpha=\sum_{i=1}^d c_i\lambda_i$, $c_i\in{\mathbb N}$).\;Namely, it is free if and only if
$G=G_1\times\cdots\times G_s$ where
every $G_i$ is either a simply connected simple
algebraic group or isomorphic to ${\rm SO}_{n_i}$ for an
odd $n_i$ (see \cite[\S3]{St75}, \cite[Prop.\;4.1]{Ri79}, \cite[Prop.\;13.3]{Ri82}, \cite[Remark\;3.16]{Po11}).\;In
particular,
 if $G$ is simply connected, then ${\mathscr H}$
 coincides with the set of all fundamental weights
 and generates $\XT+$ freely.\;Note that $\lambda^*_i\in \mathscr H$ for every $i$.

To understand presentation of ${\mathcal S}^\pm$, denote respectively by ${\rm Sym}\,{\mathcal S}^\pm(\lambda_i)$ and ${\rm Sym}^m\,{\mathcal S}^\pm(\lambda_i)$  the symmetric algebra and the $m$th symmetric power of ${\mathcal S}^\pm(\lambda_i)$.\;The naturally ${\mathbb N}^d$-graded free commutative $k$-algebra
\begin{equation}\label{falga}
{\mathcal F}^\pm:={\rm Sym}\,{\mathcal S}^\pm(\lambda_1)\otimes_k\cdots\otimes_k {\rm Sym}\,{\mathcal S}^\pm(\lambda_d)
 \end{equation}
 may be viewed as the algebra of regular functions $k[L^\pm]$ on the vector space
 \begin{equation*}
 L^\pm:={\mathcal S}^\pm(\lambda_1)^*\oplus\cdots \oplus {\mathcal S}^\pm(\lambda_d)^*.
 \end{equation*}
 Let $e_{i}$ be the $i$th unit vector of ${\mathbb N}^d$ and let ${\mathcal F}^\pm_{p,q}$ be the homogeneous component of ${\mathcal F}^\pm$ of degree $e_p+e_q$.\;We have the natural isomorphisms of $G$-modules
\begin{equation}\label{ssss}
\varphi_{p,q}^\pm\colon {\mathcal F}^\pm_{p,q}\xrightarrow{\cong}{\mathcal S}^\pm_{p,q}:=\begin{cases}
{\mathcal S}^\pm(\lambda_p)\otimes_{k}{\mathcal S}^\pm(\lambda_q)
&\mbox{if $p\neq q$},\\
{\rm Sym}^{2}\,{\mathcal S}^\pm(\lambda_p) &\mbox{if $p=q$.}\end{cases}
\end{equation}

By Theorems  \ref{RR}, \ref{fg} the natural multiplication homomorphisms
\begin{equation}\label{phipsi}
\phi^\pm\colon {\mathcal F}^\pm\to {\mathcal S}^\pm\quad\mbox{and}\quad
\psi_{p,q}^\pm\colon {\mathcal S}^\pm_{p,q}\to {\mathcal S}^\pm(\lambda_p+\lambda_q)
\end{equation}
 are surjective.\;Since ${\mathcal F}^\pm$ is a polynomial algebra, the surjectivity of $\phi^\pm$ reduces finding a presentation of ${\mathcal S}^\pm$ by generators and relations to describing ${\rm ker}\,\phi^\pm$.\;If
 $d=\dim T$, the following explicit description of ${\rm ker}\,\phi^\pm$ is available:

\begin{theorem}\label{grS} Let $G$ be a connected semisimple group such that the Hilbert basis
{\rm \eqref{Hilb}} freely generates the semigroup $\,\XT+$.\;Then
\begin{enumerate}[\hskip 2.2mm\rm(i)]
\item the ideal ${\rm ker}\,\phi^\pm$ of the ${\mathbb N}^d$-graded
$k$-algebra ${\mathcal F}^\pm$ is homogeneous;
\item this ideal is generated by the union of all its homogeneous components of the total degree $2$;
 \item the set of these homogeneous components coincides with the set of all
subspaces
$\big(\varphi_{p,q}^\pm\big)^{-1}\big({\rm ker}\,\psi_{p,q}^\pm\big)$,
$1\leqslant p\leqslant q\leqslant d$.
\end{enumerate}
\end{theorem}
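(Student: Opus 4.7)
The plan is to put an $\XT+$-grading on $\mathcal{S}^\pm$ coming from \eqref{grS+-}, match it with the $\mathbb{N}^d$-grading of $\mathcal{F}^\pm$ via the identification $\mathbb{N}^d\cong\XT+$ afforded by freeness of the Hilbert basis $\{\lambda_1,\ldots,\lambda_d\}$, and then reduce (ii) to the main result of \cite{KR87} on quadratic presentation of multi-homogeneous coordinate rings of flag varieties; (iii) will then drop out from inspection of the quadratic pieces.

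For (i), freeness of $\{\lambda_1,\ldots,\lambda_d\}$ makes $(n_1,\ldots,n_d)\mapsto\sum n_i\lambda_i$ a bijection $\mathbb{N}^d\to\XT+$. By Theorem~\ref{RR} and \eqref{grS+-}, $\phi^\pm$ sends $\mathcal{F}^\pm_{(n_1,\ldots,n_d)}$ into the weight piece $\mathcal{S}^\pm(\sum n_i\lambda_i)$, and distinct weight components of $\mathcal{S}^\pm$ are linearly independent; this forces $\ker\phi^\pm$ to be $\mathbb{N}^d$-homogeneous. For (iii), by construction $\phi^\pm|_{\mathcal{F}^\pm_{p,q}}=\psi_{p,q}^\pm\circ\varphi_{p,q}^\pm$, and since $\varphi_{p,q}^\pm$ is an isomorphism (see \eqref{ssss}) one obtains $\ker\phi^\pm\cap\mathcal{F}^\pm_{p,q}=(\varphi_{p,q}^\pm)^{-1}(\ker\psi_{p,q}^\pm)$; these exhaust the multidegree components of $\ker\phi^\pm$ of total degree $2$, since each pair $(p,q)$ with $p\leq q$ gives a distinct multidegree $e_p+e_q$.

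For (ii), recall that $\mathcal{S}^-(\lambda)=H^0(\lambda^*)$ by \eqref{H}, so \eqref{pi*} identifies $\mathcal{S}^-$ with the multi-homogeneous coordinate algebra $\bigoplus_{\lambda\in\XT+}H^0(G/B^+,\mathscr{L}(\lambda))$ of $G/B^+$, graded by the ample line bundles $\mathscr{L}(\lambda_1),\ldots,\mathscr{L}(\lambda_d)$; analogously for $\mathcal{S}^+$ with $B^-$. Thus $\ker\phi^\pm$ is precisely the homogeneous ideal defining the affine multicone over $G/B^\mp$ embedded in $L^\pm$ by these sections. The main theorem of \cite{KR87} asserts that this defining ideal is generated by its quadratic part, which combined with (i) and (iii) yields (ii).

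The substantive obstacle is the quadratic-generation input (ii) in positive characteristic: the modules $\mathcal{S}^\pm(\lambda)$ need not be simple there, so one cannot argue directly from a highest-weight decomposition as in Kostant's characteristic-zero treatment \cite{LT79}. One must rely on the characteristic-free Kempf-Ramanathan theorem of \cite{KR87}, which in turn uses the Frobenius-splitting and cohomology-vanishing techniques of \cite{RR85}; once this input is granted, the rest of the theorem reduces to the grading bookkeeping in (i) and the identification of the quadratic locus in (iii).
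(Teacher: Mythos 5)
Your proposal is correct and follows essentially the same route as the paper, whose proof of Theorem \ref{grS} consists precisely of the citation to the main result of \cite{KR87}; your additional bookkeeping (using freeness of the Hilbert basis to match the $\mathbb{N}^d$-grading with the $\XT+$-grading for (i), and identifying the degree-$2$ components via $\phi^\pm|_{\mathcal{F}^\pm_{p,q}}=\psi^\pm_{p,q}\circ\varphi^\pm_{p,q}$ for (iii)) just makes explicit what the paper leaves implicit.
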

\begin{proof} This is the main result of \cite{KR87}.
\end{proof}

\begin{remark}
 In characteristic $0$ for the first time the proof of Theorem \ref{grS}
 was
 obtained (but not published) by B. Kostant; his proof appeared in \cite[Thm.\;1.1]{LT79}.\;In this case, \eqref{ssss} and the surjectivity of $\psi_{p,q}^\pm$ yield
that $\psi_{p,q}^\pm$ is the projection of ${\mathcal S}^\pm_{p,q}$ to the Cartan component
of ${\mathcal S}^\pm_{p,q}$, and ${\rm ker}\,\psi_{p,q}^\pm$ is the unique $G$-stable direct complement to
this component.\;The subspace ${\rm ker}\,\psi_{p,q}^\pm$
admits the following description using the notation of Remark \ref{000}
(loc.\,cit.).\;Let
$\{x_1,\ldots, x_n\}$ and $\{x_1^*,\ldots, x_n^*\}$ be the dual bases of
${\rm Lie}\,G$
with respect to
$\Phi$.\;Then
${\rm ker}\,\psi_{p,q}^+$ is the ima\-ge of the linear transformation $\big(\!\sum _{s=1}^n(x_s\otimes x_s^*+x_s^*\otimes x_s)\!\big)-2\Phi(\lambda_p^*, \lambda_q^*){\rm id}$
of the vector space\;${\mathcal S}_{p,q}^\pm$.
\end{remark}

Summing up, if
$G$ is a connected semisimple group such that the Hilbert basis
{\rm \eqref{Hilb}} freely generates the semigroup $\,\XT+$, then
the sought-for cano\-ni\-cal presentation of $k[G]$ is given
by the surjective
homomorphism
\begin{equation}\label{phiphi}
\phi:=\phi^+\otimes \phi^-\colon {\mathcal F}:=\mathcal F^+\otimes_k \mathcal F^-
\to k[G]
\end{equation}
 of the polynomial $k$-algebra ${\mathcal F}$ and the following generating system $\mathscr R$ of the ideal
 $\ker\,\phi$.\;Identify
 ${\mathcal F}^+$ and ${\mathcal F}^-$ with subalgebras of $\mathcal F$
 in the natural way.\;Then
 ${\mathscr R}={\mathscr R}_1 \bigsqcup {\mathscr R}_2$, where
 \begin{equation}\label{R1}
 {\mathscr R}_1=\displaystyle
 \bigcup_{p, q}
 \big(\big(\varphi_{p,q}^+\big)^{-1}\big({\rm ker}\,\psi_{p,q}^+\big)\,\mbox{\Large $\cup$}\, \big(\varphi_{p,q}^-\big)^{-1}\big({\rm ker}\,\psi_{p,q}^-\big)\big)
 \end{equation}
(see the definition of $\varphi_{p,q}^\pm$, $\psi_{p,q}^\pm$ in  \eqref{ssss}, \eqref{phipsi})
and
 \begin{equation}\label{R2}
 {\mathscr R}_2=\{s_{\lambda_1}-1,\ldots, s_{\lambda_d}-1\}
   \end{equation}
   (see the definition of $s_{\lambda_i}$ in Corollary \ref{corrr}).\;The elements of
 ${\mathscr R}_1$ (respectively, ${\mathscr R}_2$) are the Pl\"ucker-type (respectively, the ${\rm SL}_2$-type) relations of the presentation.

The canonical presentation of $k[G]$
is redundant.\;To
reduce the size of ${\mathscr R}_1$, we may replace every space $\ker\,\psi^\pm_{p,q}$ in \eqref{R1} by a basis of
this space.\;Finding such a basis falls
within  the framework of Standard Monomial Theory.

\section{An example}\label{example}

As an illustration, here we explicitly describe  the canonical presentation of $k[G]$
for $G={\rm SL}_n$, $n\geqslant 2$, and ${\rm char}\,k=0$.

Let $T$ be the maximal torus of diagonal matrices in $G$, and let $B^+$ (respectively, $B^-$) be the Borel subgroup of lower (respectively, upper) triangular matrices in $G$.\;Then
\begin{gather*}
\mathscr H\!=\!\{\varpi_1,\ldots, \varpi_{n-1}\},\;\;\mbox{where}\\[-.5mm]
\varpi_d\colon T\!\to\! k,\;\; {\rm diag}(a_1,\ldots, a_n)\mapsto a_{n-d+1}\cdots a_n.
\end{gather*}

Every pair $i_1, i_2\in [n]$
determines the function
\begin{equation}\label{fxij}
x_{i_1, i_2}\colon G\to k, \quad \begin{pmatrix}a_{1,1}&\ldots&a_{1,n}\\
\hdotsfor[2]{3}\\
a_{n,1}&\ldots&a_{n,n}
\end{pmatrix}
\mapsto a_{i_1,i_2}.
\end{equation}
The $k$-algebra generated by all functions
\eqref{fxij} is $k[G]$.

For every $d\in [n-1]$ and every sequence
$i_1,\ldots, i_d$ of $d$ elements of $[n]$, put
\begin{equation*}
f^-_{i_1,\ldots, i_d}:=\det\! \begin{pmatrix}x_{i_1,1}&\ldots&x_{i_1,d}\\
\hdotsfor[2]{3}\\
x_{i_d,1}&\ldots&x_{i_d,d}
\end{pmatrix}\!\!, \quad
f^+_{i_1,\ldots, i_d}:=\det \!\begin{pmatrix}x_{i_1,n-d+1}&\ldots&x_{i_1,n}\\
\hdotsfor[2]{3}\\
x_{i_d,n-d+1}&\ldots&x_{i_d,n}
\end{pmatrix}\!\!.
\end{equation*}

 For every fixed $d$, all functions $f^-_{i_1,\ldots, i_d}$ (respectively, $f^+_{i_1,\ldots, i_d}$) such that
 $i_1<\cdots< i_d$  are linearly independent over $k$ and their linear span over $k$ is
 the simple $G$-module ${\mathcal S}^-(\varpi_d)$ (respectively,  ${\mathcal S}^+(\varpi_d)$); see, e.g., \cite[Prop.\;3.2]{FT92}.\;Therefore, denoting by $x^\pm_{i_1,\ldots, i_d}$ the element $f^\pm_{i_1,\ldots, i_d}$ of the
 $k$-al\-geb\-ra ${\mathcal F}^\pm$ defined by  \eqref{falga}, we
identify ${\mathcal F}^\pm$ with  the polynomial $k$-algebra in variables $x^\pm_{i_1,\ldots, i_{d}}$, where $d$ runs
over $[n-1]$ and $i_1,\ldots, i_{d}$ runs over
$[n]_d$.\;Correspondingly, the $k$-algebra ${\mathcal F}$ is identified with
the polynomial $k$-algebra in the variables
$x^-_{i_1,\ldots, i_{d}}$ and $x^+_{i_1,\ldots, i_{d}}$,
the homomorphism \eqref{phiphi} takes the form
\begin{equation*}\label{phiphiphi}
\phi\colon {\mathcal F}
\to k[G],\quad x^+_{i_1,\ldots, i_{d}}\mapsto f^+_{i_1,\ldots, i_{d}},\;\; x^-_{i_1,\ldots, i_{d}}\mapsto f^-_{i_1,\ldots, i_{d}},
\end{equation*}
and $\phi^\pm=\phi|_{{\mathcal F}^\pm}$.\;Below the sets \eqref{R1} and \eqref{R2} are explicitly specified using this notation.

First, we will specify the Pl\"ucker-type relations.\;It
  is convenient to introduce the following elements of ${\mathcal F}^\pm$.\;Let  $i_1,\ldots, i_d$ be a sequence of $d\!\in\! [n\!-\!1]$ elements of $[n]$,  and let $j_1,\ldots, j_d$ be
the nondecreasing sequence obtained from $i_1,\ldots, i_d$ by permutation.\;Then we put
\begin{equation*}
x^\pm_{i_1,\ldots, i_d}=\begin{cases}
{\rm sgn}(i_1,\ldots, i_d)x^\pm_{j_1,\ldots, j_d}&\mbox{if $i_p\neq i_q$ for all $p\neq q$},\\
0& \mbox{otherwise}.
\end{cases}
\end{equation*}

The $k$-algebra $\mathcal S^\pm$ is the coordinate algebra of the affine multicone over the flag variety, see \cite{To79}.\;By the well-known classical Hodge's result \cite{Ho42}, \cite{Ho43} (see also \cite[p.\;434, Cor.\,1]{To79}) the ideal ${\rm ker}\,\phi^\pm$ is
  generated by all elements of the form
\begin{equation}\label{Plucker}
\displaystyle \sum_{l=1}^{q+1}(-1)^lx^\pm_{i_1,\ldots, i_{p-1}, j_l} x^\pm_{\hskip -.7mm j_1,\ldots, \widehat{j_l},\ldots, j_{q+1}},
\end{equation}
where $p$ and $q$ run over $[n-1]$, $p\leqslant q$, and $i_1,\ldots, i_{p-1}$ and $j_1,\ldots, j_{q+1}$ run over
$[n]_{p-1}$ and $[n]_{q+1}$ respectively.\;Since
every element \eqref{Plucker}
is homogeneous  of degree 2, this result together with
Theorem \ref{grS} imply that, for every fixed $p, q\in [n-1]$, the
set $\big(\varphi_{p,q}^\pm\big)^{-1}\big({\rm ker}\,\psi_{p,q}^\pm\big)$ in \eqref{R1} is the linear span of
all elements \eqref{Plucker}, where $i_1,\ldots, i_{p-1}$ and $j_1,\ldots, j_{q+1}$ run over
$[n]_{p-1}$ and $[n]_{q+1}$ respectively.\;This describes the Pl\"ucker-type relations \eqref{R1}.

Secondly,  we will describe $s_{\varpi_d}$.\;If
${\boldsymbol {i}}\in [n]_{n-d}$ is a sequence $i_1,\ldots, i_{n-d}$, we put $x^\pm_{\boldsymbol i}:=
x^\pm_{i_1,\ldots, i_{n-d}}$ and denote by
 ${\boldsymbol {i}}^*\in [n]_{d}$ the unique
 sequence $j_1,\ldots, j_{d}$
whose intersection with $i_1,\ldots, i_{n-d}$
is empty.\;Let ${\rm sgn}({\boldsymbol {i}}, {\boldsymbol {i}}^*)$ be the sign of the permutation
$(i_1,\ldots, i_{n-d},j_1,\ldots, j_{d})$.\;Then by \cite[Thm.\;3.1(b)]{FT92},
\begin{equation*}
s_{\varpi_d}=\displaystyle \sum_{\boldsymbol {i}\in [n]_{n-d}} {\rm sgn}({\boldsymbol {i}}, {\boldsymbol {i}}^*)x^-_{\boldsymbol {i}}x^+_{\boldsymbol {i}^*}.
\end{equation*}
This describes the ${\rm SL}_2$-type relations \eqref{R2}.

A similar description of the presentation of $k[G]$ may be given for
the classical groups $G$
 of several other types:\;for them, the Pl\"ucker-type (respectively, the ${\rm SL}_2$-type) relations are obtained
 using \cite{LT79}, \cite{LT85} (respectively,
\cite{FT92}).

\end{document}